\newcommand{\tr}{\textnormal{tr}}
\newcommand{\al}{\alpha}
\newcommand{\be}{\beta}
\newcommand{\ga}{\gamma}
\newcommand{\ep}{\epsilon}
\newcommand{\vp}{\varphi}
\newcommand{\vep}{\varepsilon}
\newcommand{\ob}{\omega_\be}
\newcommand{\om}{\omega}
\newcommand{\la}{\lambda}
\newcommand{\oeta}{\omega_{\eta}}
\newcommand{\vpe}{\varphi_\eta}
\newcommand{\peta}{\psi_\eta}
\newcommand{\deta}{d_{\eta}}
\newcommand{\dinf}{d_{\infty}}
\newcommand{\tvep}{\tilde\vep}
\newcommand{\oep}{\omega_\ep}
\newcommand{\vpep}{\varphi_\ep}
\newcommand{\oepet}{\omega_{\ep,\eta}}
\newcommand{\tomega}{\tilde{\omega}}
\newcommand{\dbar}{\overline{\partial}}
\newcommand{\sE}{\mathcal{E}}
\newcommand{\sR}{\mathcal{R}}
\newcommand{\sC}{\mathcal{C}}
\newcommand{\sL}{\mathcal{L}}
\newcommand{\homega}{\hat{\omega}}
\newcommand{\ddbar}{\sqrt{-1}\partial\dbar}
\newtheorem{theorem}{Theorem}[section]
\newtheorem{proposition}{Proposition}[section]
\newtheorem{lemma}{Lemma}[section]
\newtheorem{definition}{Definition}[section]
\newtheorem{corollary}{Corollary}[section]
\newtheorem{remark}{Remark}[section]
\newcommand{\CC}{\mathbb{C}}
\begin{document}
\pagestyle{plain}

\title{On convexity of the regular set of conical K\"ahler-Einstein metrics}

\author{Ved V. Datar$^*$}

\address{$*$ Department of Mathematics, Rutgers University, Piscataway, NJ 08854}

\email{veddatar@math.rutgers.edu}

\thanks{Research supported by the graduate fellowship of Rutgers University.}

\begin{abstract}In this note we prove convexity, in the sense of Colding-Naber, of the regular set of solutions to some complex Monge-Amp\`ere equations with conical singularities along simple normal crossing divisors. In particular, any two points in the regular set can be joined by a smooth minimal geodesic lying entirely in the regular set. As a consequence, the classical theorems of Myers and Bishop-Gromov extend almost verbatim to this singular setting.
\end{abstract}

\maketitle

\section{\bf Introduction}

Let $(X,\homega)$ be a K\"ahler manifold with a smooth reference K\"ahler metric $\homega$. A divisor 
\begin{equation}\label{divisor}
D =  \sum_{j=1}^{N}{(1-\be_j)D_j}
\end{equation}
 with $D_j$ a smooth irreducible divisor and $\be_j \in (0,1)$, is called a {\em simple normal crossing} divisor if locally at any $p\in D$ lying in the intersection of exactly $k$ divisors $D_1,\cdots,D_k$, there exists a coordinate chart $(U,(z_1,\cdots,z_n))$ such that $D_j\Big |_U$ is cut out by $[z_j=0]$ for $j=1,\cdots,k$. 
A conical K\"ahler metric along $D$ is a smooth K\"ahler metric on $X\setminus D$ which is locally equivalent to the following model edge metric
\begin{equation}\label{edge}
\omega_e = \sum_{j=1}^{k}{|z_j|^{-2(1-\be_j)}dz_j \wedge d\bar z_j}  + \sum_{j=k+1}^{N}{dz_j\wedge d\bar z_j}
\end{equation}
Of course, being a conical metric entails additional restrictions on the asymptotics near $D$; interested readers can refer to \cite{D,JMR} for more details. 

In this note, we are instead concerned with solutions to the following singular complex Monge-Amp\`ere equation :
\begin{equation}\label{ke ma}
\begin{cases}
(\homega + \ddbar\vp)^n = \frac{e^{-\la\vp}\Omega}{\prod_{j=1}^{N}{|s_j|_{h_j}^{2(1-\be_j)}}}\\
\omega = \homega+\ddbar\vp > 0 \\
\vp \in L^{\infty}(X) \cap PSH(X,\homega)
\end{cases}
\end{equation}
where $\la \in \mathbb{R}$,  $s_j$ is the defining section of $D_j$, $h_j$ is a smooth hermitian metric on the line bundle generated by $D_j$, and $\Omega$ is a smooth volume form satisfying 

\begin{equation}\label{coh}
\ddbar\log{\Omega} + \la\homega +\chi = \sum_{j=1}^{N}{(1-\be_j)\ddbar\log{h_j}}
\end{equation}
for some smooth $(1,1)$ form $\chi$. The Ricci curvature of $\omega$ solves a twisted conical K\"ahler-Einstein equation
\begin{equation}\label{con ke}
Ric(\om) = \la\om + \chi + [D]
\end{equation}
where $[D]$ is the current of integration along $D$.  A detailed study of such equations was first carried out by Yau \cite{Y2} in his seminal paper on the Calabi conjecture.  Applications of canonical conical K\"ahler metrics to Chern number inequalities were proposed by Tian in \cite{T94} (cf. also \cite{SW} for progress in this direction).  More recently, K\"ahler-Einstein metrics with cone singularities have played an important role in the breakthrough on the existence of K\"ahler-Einstein metrics on Fano manifolds \cite{CDS,T13}.   

It is well known, since Yau's work, that any bounded solution to \eqref{ke ma} is smooth on $X\setminus D$. In the special case when the divisor is $(1-\be)D$, i.e has only one smooth divisorial component, substantial progress has been made on understanding the behavior of the metric close to the divisor, starting with the fundamental linear theory of Donaldson in \cite{D}. Building on this, existence and regularity results were obtained by Brendle in the Ricci flat case when $\be\in (0,1/2)$, and by Jeffres-Mazzeo-Rubinstein \cite{JMR} for all $\be\in (0,1)$, and all signs of curvature. In \cite{JMR}, the authors also prove the existence of a polyhomogenous expansion for conical K\"ahler-Einstein metrics near the divisor.  An alternative proof of the existence results, relying on Donaldson's linear theory, also appears in the work of Chen-Donaldson-Sun \cite{CDS2}. 

Unfortunately, since many linear systems do not contain smooth divisors, it is important to address the questions of regularity for cone angles along normal crossing divisors. The first step in this direction are the results of Campana-Guenancia-P$\breve{\text{a}}$un \cite{CGP}, and Guenancia-P$\breve{\text{a}}$un \cite{GP}. Amongst other things, they prove that any $\omega$ solving \eqref{ke ma} is locally equivalent to the standard edge metric \eqref{edge} (cf. \cite{DS} for a shorter proof). While higher regularity results are awaited, the aim of this note is to prove the convexity of $X\setminus D$ with respect to the metric induced by $\omega$. This allows the extension of the classical comparison theorems to the conical setting, and might be useful in studying the moduli space of K\"ahler-Einstein metrics with cone singularities (cf. \cite{DGSW}).  

Since $\omega$ is smooth on $X\setminus D$, it defines a length functional $\mathcal{L}_{\omega}$ and in turn, a distance function
\begin{equation*}
 d_{\omega}(p,q) = \inf\{\sL_\omega(\ga) ~|~ \ga : [0,1]\rightarrow X\setminus D \text{ piecewise smooth }, ~ \ga(0)=p,\ga(1)=q\}
\end{equation*}

\bigskip

Since, $\omega$ is locally equivalent to a model edge metric \cite{CGP,GP,DS}, it is easily seen that the metric completion of $X\setminus D$ under this distance function is homeomorphic to $X$ itself, and we set $$(X,d) = \overline{(X\setminus D,d_\omega)}$$  where the bar denotes the metric completion. We first prove an approximation theorem for $\omega$, extending results of \cite{CDS1,T13} for conical K\"ahler-Einstein metrics in the Fano case.

\begin{proposition}\label{smoothening}
Let $\om = \homega+\ddbar\vp$ be a solution to \eqref{ke ma} with $\vp \in PSH(X,\homega)\cap L^{\infty}(X)$. Then there exist uniform constants $A,\Lambda \gg 1$, and a sequence $\oeta \in [\om]$ of smooth K\"ahler metrics such that \\
\begin{enumerate}
\item $Ric(\oeta) > -A\oeta$ ~; $diam(X,\oeta) < \Lambda$\\
\item As $\eta \rightarrow 0$, $$(X,\oeta) \xrightarrow{d_{GH}} (X,d)$$
where $(X,d)$ as above, is the metric completion of $(X\setminus D,d_\omega)$.
\end{enumerate}
\end{proposition}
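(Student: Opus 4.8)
The plan is to regularize the singular Monge--Amp\`ere equation \eqref{ke ma} by smoothing its right-hand side, run the standard chain of a priori estimates, and extract from the regularized solutions a family of smooth K\"ahler metrics in $[\om]$ with the desired Ricci and diameter bounds that converges to $(X,d)$. For $\eta\in(0,1)$ I would set
\[
F_\eta \;=\; \frac{\Omega}{\prod_{j=1}^N\big(|s_j|_{h_j}^2+\eta\big)^{1-\be_j}},
\]
a smooth positive volume form on all of $X$ with $\int_X F_\eta\to\int_X F_0<\infty$ as $\eta\to0$ (the model density $\prod|s_j|^{-2(1-\be_j)}$ is locally integrable since $1-\be_j<1$), and solve the \emph{smooth} equation $(\homega+\ddbar\vpe)^n=c_\eta e^{-\la\vpe}F_\eta$, $\oeta:=\homega+\ddbar\vpe>0$, with $c_\eta\to1$ a normalization. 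Existence is the classical theory of Yau and Aubin when $\la\le0$; when $\la>0$ one proceeds as in the Fano conical setting of \cite{CDS1,T13}, or freezes the exponential factor at a regularization of $\vp$ (which has the advantage of pinning $\vpe$ to the given $\vp$). Each $\oeta$ is a smooth K\"ahler metric in $[\om]$, so the class and total volume are fixed along the family.

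The a priori estimates I would use are: a uniform $\|\vpe\|_{L^\infty(X)}\le C$ from Ko\l odziej's estimate, since the densities $c_\eta e^{-\la\vpe}F_\eta/\homega^n$ are bounded in $L^p(X,\homega^n)$ uniformly in $\eta$ for $p$ slightly bigger than $1$ (so that $p(1-\be_j)<1$ for every $j$); and uniform interior $C^\infty(K)$ bounds on compacts $K\Subset X\setminus D$ from Yau's second order estimate together with Evans--Krylov and Schauder, the right-hand side being uniformly smooth there. Passing to a limit and invoking uniqueness of bounded solutions of \eqref{ke ma} (or the pinning from the freezing when $\la>0$) gives $\vpe\to\vp$ in $C^\infty_{loc}(X\setminus D)$ and uniformly on $X$. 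The one genuinely hard ingredient is a \emph{uniform in $\eta$} two-sided comparison of $\oeta$ with the explicit regularized model edge metric $\homega+\sum_j\be_j^{-2}\ddbar(|s_j|_{h_j}^2+\eta)^{\be_j}$ in a fixed neighbourhood of $D$: this is the Laplacian estimate up to the divisor of Campana--Guenancia--P\u{a}un \cite{CGP}, Guenancia--P\u{a}un \cite{GP} and \cite{DS}, proved by a maximum principle with barriers built from $(|s_j|_{h_j}^2+\eta)^{\be_j}$, and it yields in particular a uniform global lower bound $\oeta\ge c\,\homega$ and a uniform upper bound of $\oeta$ by the regularized edge metric.

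Granting these, part (1) is immediate. Differentiating the Monge--Amp\`ere equation and using \eqref{coh}, one gets in a local trivialization of $h_j$ with defining function $f_j$ and weight $e^{-\phi_j}$ that
\[
Ric(\oeta)\;=\;\la\,\oeta\;+\;\chi\;+\;\sum_{j=1}^N(1-\be_j)\,\ddbar\log\!\big(|f_j|^2+\eta\, e^{\phi_j}\big),
\]
and an elementary computation shows each term $\ddbar\log(|f_j|^2+\eta e^{\phi_j})\ge-C_0\homega$ uniformly in $\eta$. Hence $Ric(\oeta)\ge\la\,\oeta-C_1 c^{-1}\oeta\ge-A\,\oeta$ (using $\homega\le c^{-1}\oeta$ and the sign of $\la$). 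The diameter bound $diam(X,\oeta)<\Lambda$ then follows from the uniform equivalence of $\oeta$ with the regularized edge metric, whose diameter is uniformly bounded: for the one-variable model $|z|^{-2(1-\be)}dz\wedge d\bar z$ this is clear after the substitution $z\mapsto z^\be$, which exhibits it as a flat cone of total angle $2\pi\be$ over a bounded region.

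For part (2) I would show the distance functions converge uniformly. Since $\oeta\le C\om$ near $D$ and $\oeta\to\om$ in $C^\infty_{loc}(X\setminus D)$, the $\oeta$-length of any piecewise smooth path converges to its $\om$-length, so $\limsup_\eta d_{\oeta}\le d_\om$ on $X\times X$; the lower bound $\oeta\ge c\,\homega$ together with Fatou applied to minimizing sequences gives the reverse inequality, and uniform convergence of $d_{\oeta}$ to $d_\om$ upgrades to Gromov--Hausdorff convergence $(X,\oeta)\xrightarrow{d_{GH}}(X,d)$. The main obstacle throughout is the uniform near-$D$ comparison invoked above: away from $D$ everything is classical, but obtaining $\eta$-independent control of $\oeta$ up to a simple normal crossing divisor --- hence control of the lengths of paths that approach or cross $D$ --- is where the real work lies.
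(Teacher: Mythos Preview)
Your construction of $\oeta$, the Ricci lower bound computation, and the $C^0$ estimate via Ko\l odziej are essentially the same as in the paper. Where you diverge is in the $C^2$ estimate: the paper uses only Chern--Lu to obtain the crude bound $C^{-1}\homega<\oeta<C\homega/\prod(|s_j|_{h_j}^2+\eta)^{1-\be_j}$, whereas you invoke the uniform-in-$\eta$ two-sided comparison with the regularized edge metric from \cite{CGP,GP,DS}. That stronger input is legitimate and buys you a direct diameter bound (the paper instead argues indirectly via Gromov compactness, identifying $X\setminus D$ as a full-measure open set in any pointed limit and pulling back the bound $diam(X,d)<\infty$).

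There is, however, a genuine gap in your Gromov--Hausdorff argument. The direction $\limsup_\eta d_\eta\le d$ is fine: for any fixed path in $X\setminus D$ smooth convergence gives convergence of lengths. The reverse inequality is where your ``Fatou applied to minimizing sequences'' fails. An $\oeta$-minimal geodesic $\ga^\eta_{pq}$ from $p$ to $q$ may well pass near (or through) $D$, and near $D$ you do \emph{not} have $\oeta\ge c\,\om$: the regularized edge metric satisfies $\omega_{e,\eta}\le\omega_e$, so your two-sided comparison only yields $\oeta\ge C^{-1}\omega_{e,\eta}$, which is strictly smaller than $C^{-1}\om$ there. The global bound $\oeta\ge c\,\homega$ gives equicontinuity of $\ga^\eta_{pq}$ with respect to $\homega$, but any lower-semicontinuity argument for lengths then produces $\liminf d_\eta(p,q)\ge c'\,d(p,q)$ with a multiplicative loss, not the sharp inequality needed to identify the limit with $(X,d)$ rather than something merely bi-Lipschitz to it.

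The paper closes this gap by a different mechanism: Gromov's relative volume comparison (Lemma~\ref{gromov}) is used to show that for every $p\in K\Subset X\setminus D$ and almost every $q'$ near $q$, the $\oeta$-minimal geodesic from $p$ to $q'$ avoids a fixed tubular neighbourhood $T$ of $D$ with $Vol_\om(\partial T)$ small. On $X\setminus T$ smooth convergence applies, so these lengths converge exactly, and one obtains $|d_\eta(p,q)-d(p,q)|<\vep$ on an $\vep$-net $K$. Your outline would become correct if you inserted this almost-geodesic-convexity step in place of the Fatou sentence.
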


It should be noted that in the Fano case with $\chi=0$ and a smooth pluri-canonical divisor $D$, Chen-Donaldson-Sun \cite{CDS1} and Tian \cite{T13}, prove a much stronger result, namely one can approximate with the {\em same} Ricci lower bound as the conical metric.  For such a result, it is of course necessary that $X$ is Fano.

Next, recall that a unit-speed path $\ga:[0,l] \rightarrow X$ joining $p,q$ is said to be a {\em minimal geodesic} if $d(p,q)=l$. It is said to be a {\em limiting geodesic} if there exists a sub-sequence $\{\eta_j\}$ with unit-speed $\omega_{\eta_j}$-geodesics  $\ga^{\eta_j}:[0,l_j]\rightarrow X$ such that $l_j \rightarrow j$ and $\ga^{\eta_j}\rightarrow \ga$ point wise. Limiting geodesics can be usually found in abundance. Our main theorem is :

\begin{theorem}\label{conv}
$X\setminus D \subset(X,d)$ is geodesically convex, in the following sense: if any interior point of a limiting minimal geodesic lies in $X\setminus D$, then all the interior points must lie in $X\setminus D$. 
\end{theorem}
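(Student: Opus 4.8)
The plan is to combine the Gromov–Hausdorff approximation of Proposition \ref{smoothening} with the Colding–Naber interior regularity theory for limits of manifolds with a uniform Ricci lower bound. Suppose $\ga:[0,l]\to X$ is a limiting minimal geodesic, arising as a pointwise limit of $\oeta_j$-geodesics $\ga^{\eta_j}$, and suppose $\ga(t_0)\in X\setminus D$ for some interior $t_0\in(0,l)$. I want to show every interior point $\ga(t)$ lies in $X\setminus D$. The key structural input is that, by Proposition \ref{smoothening}(1), $(X,\oeta_j)$ has $Ric>-A\oeta_j$ and $diam<\Lambda$ uniformly, so Colding–Naber's theory applies to the limit space $(X,d)$: along any limit geodesic the tangent cones are well-defined at interior points and the ``geodesics avoid highly singular strata'' phenomenon is available. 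More concretely, what one extracts from Colding–Naber is Hölder continuity (indeed, near-constancy) of the \emph{volume density} / the geometry of tangent cones along the interior of a limit geodesic.

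The core step is a gap, or rigidity, statement at points of $D$. The two ``extremes'' are: (i) at a point $p\in X\setminus D$, the metric $\om$ is smooth, so the tangent cone is $\CC^n$ and the volume density is $1$; (ii) at a point $p\in D$ lying in exactly $k$ components, since $\om$ is locally equivalent to the model edge metric \eqref{edge} by \cite{CGP,GP,DS}, the tangent cone is the product $\CC^{n-k}\times \prod_{j=1}^{k} C(S^1_{\be_j})$ of $\CC^{n-k}$ with a product of two–dimensional cones of angle $2\pi\be_j<2\pi$, whose volume density equals $\prod_{j=1}^k \be_j<1$. Thus the volume density function $p\mapsto \Theta(p)$ on $(X,d)$ takes the value $1$ exactly on $X\setminus D$ and a value bounded away from $1$ (by $\max_j\be_j<1$, in fact by the relevant product) on $D$; there is a definite gap. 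Then: along the interior of the limiting minimal geodesic, $\Theta\circ\ga$ is continuous (this is the Colding–Naber input, or can be seen directly from Bishop–Gromov monotonicity plus the almost-rigidity of the Gromov–Hausdorff limit), it equals $1$ at $t_0$, so by the gap it must equal $1$ on a neighborhood, hence by a connectedness/continuity argument on all of $(0,l)$, forcing $\ga(t)\in X\setminus D$ for all interior $t$.

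To make the continuity of $\Theta\circ\ga$ rigorous I would argue as follows. Fix interior times $t_1<t_2$. On the smooth approximants, choose points $\ga^{\eta_j}(t_i)$; these converge to $\ga(t_i)$. Use Colding–Naber's segment-type inequality / the Hölder estimate for the ``parabolic'' approximations of the distance function to the endpoints along the geodesic $\ga^{\eta_j}$, together with the uniform $Ric>-A$ bound, to control the local volume ratios $\mathrm{vol}(B_r(\ga^{\eta_j}(t)))/\mathrm{vol}(B_r(\ga^{\eta_j}(t')))$ for $t,t'$ interior, uniformly in $j$; pass to the limit to get that $r\mapsto$ the limit measure ratios $\mathcal{H}^{2n}(B_r(\ga(t)))/(\om_{2n}r^{2n})$ have a common limit as $r\to0$ that is Hölder — hence continuous — in $t$ on compact subsets of $(0,l)$. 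Here I am using that the renormalized Riemannian measures converge to the natural limit measure on $(X,d)$, which for our space (being the metric completion of a smooth K\"ahler manifold with an edge metric) is just $\om^n/n!$ on $X\setminus D$ and puts no mass on $D$.

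The main obstacle, and where the argument requires care, is precisely this last point: upgrading the \emph{pointwise} Gromov–Hausdorff convergence of the geodesics $\ga^{\eta_j}\to\ga$ to enough control that Colding–Naber's interior estimates can be transferred to the limit geodesic $\ga$, and verifying that the limit space $(X,d)$ has no ``hidden'' singular set beyond $D$ that a limit geodesic might sneak through — i.e. that the tangent-cone volume density genuinely detects membership in $D$. The first is handled by Colding–Naber's stability results (limits of geodesics with uniform lower Ricci are geodesics, and their interiors lie in the top regular stratum away from a set of codimension $\geq 2$), and the second is exactly where local equivalence to the model edge metric \eqref{edge} from \cite{CGP,GP,DS}, together with the explicit cone-angle computation of $\Theta$, is essential. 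I expect the bookkeeping of constants (making the gap $1-\Theta|_D$ effective, and independent of $\eta_j$) to be routine once this framework is set up.
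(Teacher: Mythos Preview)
Your overall architecture matches the paper's: show that the Cheeger--Colding regular set $\sR$ of the limit space equals $X\setminus D$, and then invoke Colding--Naber so that along a limiting minimal geodesic the set of interior regular points is both open (because $\sR$ is open) and closed (by H\"older continuity of tangent cones), hence all of $(0,l)$ once nonempty.

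The genuine gap is in step (ii), where you assert that at $p\in D$ the tangent cone of $(X,d)$ is the model product cone $\CC^{n-k}\times\prod_j C(S^1_{\be_j})$, citing the ``local equivalence'' of $\om$ to the model edge metric from \cite{CGP,GP,DS}. That equivalence is only a \emph{quasi-isometry}: one has $C^{-1}\omega_e\le\om\le C\omega_e$ on a coordinate chart, which yields a bi-Lipschitz comparison of distances but does \emph{not} identify the tangent cone, nor pin down the volume density. Indeed the model cone $C(S^1_\be)$ is itself bi-Lipschitz to $\CC$ (compare $dr^2+\be^2r^2d\theta^2$ with $dr^2+r^2d\theta^2$ via the identity in polar coordinates), so a bi-Lipschitz bound alone cannot rule out that some tangent cone at $p\in D$ is Euclidean. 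In short, from the available normal-crossing regularity you cannot conclude $\Theta(p)=\prod_j\be_j$, nor even $\Theta(p)<1$, and the paper explicitly remarks that higher regularity in the normal-crossing case is not yet available.

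The paper gets around this as follows. To prove $\Theta(p)<1$ for $p\in D_1$ (say), it \emph{smooths all but} $D_1$: it solves an auxiliary Monge--Amp\`ere equation with conical singularity only along the smooth divisor $D_1$, for which the stronger single-divisor regularity of \cite{JMR,CDS2} gives genuine asymptotic (not merely quasi-isometric) equivalence to the model, hence the exact density $\be_1\al(n)$ at $p$. Bishop--Gromov for the smooth approximants then upgrades this to an upper bound $V_{\oep}(p,r)\le\frac{1+\be_1}{2}\al(n)$ at a fixed small scale $r$, and this inequality survives the limit $\ep\to0$ by Colding's volume convergence. That is the content of Proposition~\ref{vol est}, and it is the step your proposal is missing.
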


The theorem is proved by combining the above smoothening with the results of Colding-Naber \cite{CN} on the H\"older continuity of tangent cones for limit spaces. It must be noted that the theorem does not rule out the possibility of {\em some}  geodesic connecting $p,q\in X\setminus D$ and passing through $D$, though it is expected that such a scenario will not occur. A nice consequence of the above theorem is the following

\begin{corollary}\label{smooth geod}
Let $p,q\in X\setminus D$ with $l = d(p,q)$. Then there exists a smooth unit speed geodesic $\ga:[0,l]\rightarrow X\setminus D$ with $\ga(0) = p$ and $\ga(l)= q$.
\end{corollary}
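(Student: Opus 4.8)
The plan is to obtain the geodesic in two stages: first extract a limiting minimal geodesic joining $p$ and $q$ using Proposition \ref{smoothening} and a standard Arzel\`a--Ascoli argument, and then use Theorem \ref{conv} together with elliptic regularity to upgrade it to a smooth geodesic lying entirely in $X\setminus D$. For the first stage, fix $p,q\in X\setminus D$ and let $l=d(p,q)$. By Proposition \ref{smoothening}(2) we have $(X,\oeta)\xrightarrow{d_{GH}}(X,d)$, so $d_{\oeta}(p,q)\to l$. Since each $\oeta$ is a smooth Riemannian metric on the compact manifold $X$, there exists a minimal $\oeta$-geodesic $\ga^\eta:[0,l_\eta]\to X$ with $l_\eta=d_{\oeta}(p,q)\to l$, parametrized by unit speed. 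Reparametrizing all of them on the fixed interval $[0,l]$, the curves $\ga^\eta$ are uniformly Lipschitz (with constant $l_\eta/l\to 1$), hence equicontinuous, and they take values in the compact space $(X,d)$; therefore, after passing to a subsequence, $\ga^\eta\to\ga$ uniformly for some curve $\ga:[0,l]\to X$ with $\ga(0)=p$, $\ga(l)=q$. The length (with respect to $d$) of $\ga$ is at most $\liminf l_\eta=l$ by lower semicontinuity of length under uniform convergence in a Gromov--Hausdorff limit, and it is at least $d(p,q)=l$; so $\ga$ is a minimal geodesic, and by construction it is a limiting minimal geodesic in the sense defined before Theorem \ref{conv}.

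For the second stage, I would invoke Theorem \ref{conv}. Since $p\in X\setminus D$ is an interior point of no interval issue arises at the endpoints, but more to the point: I claim $\ga$ has an interior point in $X\setminus D$. Indeed, $X\setminus D$ is open and $\ga(0)=p\in X\setminus D$, so $\ga(t)\in X\setminus D$ for all sufficiently small $t>0$; such a $t$ is an interior point. Hence Theorem \ref{conv} applies and forces \emph{all} interior points of $\ga$ to lie in $X\setminus D$. Together with the endpoints $p,q\in X\setminus D$, we conclude $\ga([0,l])\subset X\setminus D$.

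It remains to see that $\ga$ is smooth. On the open set $X\setminus D$ the metric $\om$ is a genuine smooth Riemannian metric, and $\ga$ is a curve in $X\setminus D$ which is locally length-minimizing (any subarc of a minimal geodesic is minimal); by the standard theory of geodesics on smooth Riemannian manifolds, a locally minimizing curve, once known to be continuous and to lie in the smooth locus, is (after unit-speed reparametrization, which $\ga$ already is) a smooth solution of the geodesic equation. Thus $\ga:[0,l]\to X\setminus D$ is the desired smooth unit-speed geodesic. The only genuinely substantive input is Theorem \ref{conv}; the rest is soft. The step I would be most careful about is the lower semicontinuity of length and the verification that the limit curve is exactly unit-speed and minimal, i.e.\ that no length is lost in the Gromov--Hausdorff limit at the endpoints (handled by $l_\eta\to l$ and $d_{\oeta}(p,q)=l_\eta$ exactly), and confirming that $\ga$ indeed meets the definition of a \emph{limiting} geodesic so that Theorem \ref{conv} is applicable.
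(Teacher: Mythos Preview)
Your proof is correct and follows the same route as the paper: extract a limiting minimal geodesic from the $\oeta$-minimal geodesics via Arzel\`a--Ascoli, apply Theorem \ref{conv} to confine it to $X\setminus D$, and then argue smoothness. The only minor variation is in the smoothness step---the paper traps the approximating $\ga^\eta$ in a compact subset of $X\setminus D$ and uses $C^\infty_{loc}$ convergence of $\oeta$ to $\om$ there, whereas you appeal directly to the fact that a locally length-minimizing unit-speed curve in a smooth Riemannian manifold solves the geodesic equation; both arguments are standard and your version is slightly more economical.
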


\begin{remark}
The theorem and the corollary may also hold under slightly milder conditions by replacing the smooth volume form $\Omega$ in \eqref{ke ma} by $e^{F}\homega^n$ with $F \in C^0(X)\cap C^{\infty}(X\setminus D)$, if we make an additional apriori assumption that $$Ric(\omega) > -C\omega$$ on $X\setminus D$.
\end{remark}

\noindent{\bf Notation.} Distances with respect to $\oeta$ and $\om$ are denoted by $\deta,d$ respectively. Paths connecting points $p,q$ are denoted by $\ga_{pq}$. Minimal geodesics are denoted with superscripts to specify the reference metric. For example, $d_\eta$-minimal and $d$-minimal geodesics are denoted by $\ga^{\eta}_{pq}$ and $\ga^{d}_{pq}$ respectively. The lengths of paths are denoted by $\sL_\omega$ and $\sL_\eta$ respectively. \\

\noindent{\bf Acknowledgements.} I am grateful to my advisor Jian Song for his support, encouragement and guidance throughout my graduate studies. I wish to thank Aaron Naber for clarifying that convexity in his work with Colding requires only a Ricci lower bound if the regular set can be shown to be open. I would also like to thank Xiaowei Wang, Zhenlei Zhang and Bin Guo for many stimulating discussions.

\section{\bf Approximation and smooth convergence away from $D$.}

We need to deal with the cases $\la \leq 0$ and $\la>0$ separately. 

\begin{itemize}

\item \textbf{$\la>0$.} We follow the same strategy as in \cite{CGP,CDS1,T13}. By Demailly's regularization theorem \cite{De}, there exists a sequence $\peta \in C^{\infty}(X)\cap PSH(X,\homega)$ such that $\peta \searrow \vp$ point wise as $\eta \rightarrow 0$. The metrics $\oeta$ are then constructed as the solutions to the following perturbation of \eqref{ke ma}
\begin{equation}\label{pert ma}
\begin{cases}
(\homega + \ddbar\vpe)^n =  \frac{e^{-\la\peta + c_\eta}\Omega}{\prod_{j=1}^{N}{(|s_j|^2_{h_j} + \eta)^{(1-\be_j)}}}\\
\oeta = \homega + \ddbar\vpe >0
\end{cases}
\end{equation}
 where $c_\eta$ is a constant such that the integrals on both sides are equal. The existence of a unique smooth solution for $\eta>0$, follows from the work of Yau \cite{Y2}. 
 
 \item\textbf{$\la \leq 0$.} In this case we let $\oeta = \homega+\ddbar\varphi_\eta$ be a solution of  
 \begin{equation}\label{pert ma easy}
\begin{cases}
(\homega + \ddbar\vpe)^n =  \frac{e^{-\la\vpe + c_\eta}\Omega}{\prod_{j=1}^{N}{(|s_j|^2_{h_j} + \eta)^{(1-\be_j)}}}\\
\oeta = \homega + \ddbar\vpe >0
\end{cases}
\end{equation}
The existence is guaranteed by Yau \cite{Y2} when $\la = 0$, and Aubin-Yau \cite{Au,Y2} when $\la<0$. When $\la=0$ we also need an additional normalization such as $\sup_X{\varphi_\eta} = 0$. Also note that $c_\eta$ can be taken to be zero when $\la<0$.
\end{itemize}
Note that $|c_\eta|$ is uniformly bounded, and in fact tends to zero as $\eta\rightarrow 0$. We next obtain lower bounds for the Ricci curvatures of $\oeta$.

\begin{lemma}\label{ric bdd weak}
If $\oeta = \homega + \ddbar\vpe$ is a solution to \eqref{pert ma}, then there exists an $A\gg1$ such that
\begin{enumerate}
\item When $\la>0$, $$Ric(\oeta) > - A\homega$$
\item When $\la\leq 0$, $$Ric(\oeta) > \la\oeta - A\homega$$
\end{enumerate}
\end{lemma}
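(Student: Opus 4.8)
The plan is to compute the Ricci curvature of $\oeta$ directly from the Monge-Ampère equation it satisfies, and then control the resulting smooth $(1,1)$-forms using the cohomological identity \eqref{coh} together with the positivity of $\oeta$. Recall that for a Kähler metric of the form $\oeta = \homega + \ddbar\vpe$ solving an equation $\oeta^n = e^{G_\eta}\homega^n$, one has $Ric(\oeta) = Ric(\homega) - \ddbar G_\eta$. So the first step is to take $\ddbar\log$ of both sides of \eqref{pert ma} (resp. \eqref{pert ma easy}): writing the right-hand side as $e^{-\la\peta + c_\eta}\Omega \big/ \prod_j (|s_j|^2_{h_j}+\eta)^{1-\be_j}$, we get
\begin{equation*}
Ric(\oeta) = -\ddbar\log\Omega + \la\,\ddbar\peta + \sum_{j=1}^N (1-\be_j)\,\ddbar\log(|s_j|^2_{h_j}+\eta).
\end{equation*}
Now substitute the identity \eqref{coh}, namely $\ddbar\log\Omega = -\la\homega - \chi + \sum_j (1-\be_j)\ddbar\log h_j$, to obtain
\begin{equation*}
Ric(\oeta) = \la\homega + \chi + \la\,\ddbar\peta + \sum_{j=1}^N (1-\be_j)\Big(\ddbar\log(|s_j|^2_{h_j}+\eta) - \ddbar\log|s_j|^2_{h_j}\Big).
\end{equation*}
In the case $\la\le 0$ one instead has $\la\,\ddbar\vpe$ in place of $\la\,\ddbar\peta$, and since $\la\,\ddbar\vpe = \la(\oeta - \homega)$, this term is exactly $\la\oeta - \la\homega$; combined with the leading $\la\homega$ this yields the term $\la\oeta$ that appears in part (2). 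For $\la>0$ the term $\la\,\ddbar\peta$ is handled differently — see below.

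The second step is to bound each of the error terms. The curvature-type terms $\ddbar\log(|s_j|^2_{h_j}+\eta)$: a direct computation gives
\begin{equation*}
\ddbar\log(|s_j|^2_{h_j}+\eta) = \frac{|s_j|^2_{h_j}}{|s_j|^2_{h_j}+\eta}\,\ddbar\log|s_j|^2_{h_j} + \frac{\eta}{(|s_j|^2_{h_j}+\eta)^2}\,\sqrt{-1}\,Ds_j\wedge\overline{Ds_j},
\end{equation*}
where $Ds_j$ denotes the Chern covariant derivative of $s_j$. The key point is that the last term is \emph{positive}, and the combination with $-\ddbar\log|s_j|^2_{h_j} = \Theta_{h_j}$ (the curvature of $h_j$, a fixed smooth form) gives
\begin{equation*}
\ddbar\log(|s_j|^2_{h_j}+\eta) - \ddbar\log|s_j|^2_{h_j} = \frac{\eta}{|s_j|^2_{h_j}+\eta}\,\Theta_{h_j} + \frac{\eta}{(|s_j|^2_{h_j}+\eta)^2}\,\sqrt{-1}\,Ds_j\wedge\overline{Ds_j} \ge \frac{\eta}{|s_j|^2_{h_j}+\eta}\,\Theta_{h_j} \ge -C_0\,\homega,
\end{equation*}
uniformly in $\eta$, since $0\le \eta/(|s_j|^2_{h_j}+\eta)\le 1$ and $\Theta_{h_j}$ is a fixed smooth form bounded below by a multiple of $\homega$. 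Similarly $\chi \ge -C_1\homega$ for a fixed constant. This already closes part (2): collecting terms, $Ric(\oeta) \ge \la\oeta - A\homega$ with $A := C_1 + \sum_j (1-\be_j)C_0$ (absorbing also $\la\homega$ if needed, but since $\la\le 0$ that term only helps or is harmless after enlarging $A$).

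The remaining, and genuinely delicate, point is part (1), where $\la>0$: we are left with the term $\la\,\ddbar\peta$, and $\peta$ is merely $\homega$-plurisubharmonic, so $\ddbar\peta \ge -\homega$ but has no useful upper bound. However, $\la>0$, so $\la\,\ddbar\peta = \la(\homega+\ddbar\peta) - \la\homega \ge -\la\homega$, using that $\homega+\ddbar\peta\ge 0$ (indeed $\peta\in PSH(X,\homega)$). Thus the bad term is bounded \emph{below} by $-\la\homega$, and we get $Ric(\oeta) \ge \la\homega + \chi - \la\homega - \sum_j(1-\be_j)C_0\homega \ge -A\homega$ after enlarging $A$. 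I expect this use of the sign of $\la$ together with $\homega$-psh-ness of the regularizing potential — rather than any cancellation — to be the crux; everything else is the routine differentiation of the Monge-Ampère equation and the elementary estimate on $\ddbar\log(|s_j|^2+\eta)$ above. One should double-check that the constants $C_0, C_1$ genuinely do not depend on $\eta$, which is clear since $\Theta_{h_j}$, $\chi$, $\homega$ are all fixed, and the coefficient functions are uniformly bounded in $[0,1]$.
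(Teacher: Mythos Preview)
Your proof is correct and follows essentially the same route as the paper's. Both compute $Ric(\oeta)$ from the Monge--Amp\`ere equation, reduce to bounding $\ddbar\log(|s_j|^2_{h_j}+\eta)-\ddbar\log|s_j|^2_{h_j}$ via the cohomological identity \eqref{coh}, and then use $\homega+\ddbar\peta\ge 0$ (for $\la>0$) or $\ddbar\vpe=\oeta-\homega$ (for $\la\le 0$) to finish; the only cosmetic difference is that the paper states the inequality $\ddbar\log(f+\eta)\ge \tfrac{f}{f+\eta}\ddbar\log f$ directly, while you identify the positive remainder $\tfrac{\eta}{(f+\eta)^2}\sqrt{-1}\,Ds_j\wedge\overline{Ds_j}$ explicitly.
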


\begin{proof}We follow the computation in \cite{CDS1}. First observe that for any smooth $f > 0$
\begin{align*}
\ddbar\log{(f + \eta)} \geq \frac{f}{f+\eta}\ddbar\log{f}
\end{align*}
On $X\setminus D$, using this and the fact that $1-\be_j\geq0$, we obtain  
\begin{align*}
\chi_\eta &:= -\la\homega - \ddbar\log{\Omega} + \sum_{j=1}^{N}{(1-\be_j)\ddbar\log{(|s_j|^2_{h_j}+\eta)}}\\
&\geq -\ddbar\log{\Omega} + \sum_{j=1}^{N}{(1-\be_j)\frac{|s_j|^2_{h_j}}{(|s_j|^2_{h_j}+\eta)}\ddbar\log{|s_j|^2_{h_j}}}\\
&= -\sum_{j=1}^{N}{(1-\be_j)\frac{\eta}{(|s_j|^2_{h_j}+\eta)}\ddbar\log{h_j}} + \chi \hspace{1in} \text{(by equation \ref{coh})}\\ 
&\geq -A\homega
\end{align*}
for some $A>>1$. Now, if $\la>0$, by \eqref{pert ma} $$Ric(\oeta) = \la(\homega+\ddbar\psi_\eta) + \chi_\eta > -A\omega$$ On the other hand, if $\la\leq 0$, by \eqref{pert ma easy}$$Ric(\oeta) = \la\oeta + \chi_\eta > \la\oeta - A\homega$$\end{proof}

Next, we obtain uniform $C^0$ and $C^2$ estimates on $\vpe$.

\begin{proposition}\label{estimate}
There exists a constant $C= C(n,\la,A,|| \om^n/\Omega||_{L^{1+\delta}(X,\Omega)}, ||Rm(\homega)||)\gg 1$ independent of $\eta$, such that \\

\begin{enumerate}
\item\begin{equation*}
||\vpe||_{C^0(X)} < C
\end{equation*}

\item \begin{equation}\label{c2}
C^{-1}\homega < \oeta < \frac{C\homega}{\prod_{j=1}^{N}{(|s_j|^2_{h_j} + \eta)^{(1-\be_j)}}}
\end{equation}
\end{enumerate}
\end{proposition}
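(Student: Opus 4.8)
The plan is to get (1) from Ko\l odziej's pluripotential $L^\infty$ estimate and (2) from a single Chern--Lu type second order estimate; the lower bound $\oeta>C^{-1}\homega$ is the substantive half of (2), and the upper bound in \eqref{c2} then follows formally from it via the Monge--Amp\`ere equation. For (1), write the right-hand side of \eqref{pert ma} (resp.\ \eqref{pert ma easy}) as $F_\eta\,\homega^n$. After normalizing $|s_j|_{h_j}\le 1$ the weight $\prod_j(|s_j|^2_{h_j}+\eta)^{1-\be_j}$ is bounded above uniformly in $\eta$, so $F_\eta$ is bounded below by a positive constant; when $\la\le 0$ one then gets $\sup_X\vpe\le C$ at once by evaluating \eqref{pert ma easy} at a maximum of $\vpe$ (there $\oeta^n\le\homega^n$), while when $\la>0$ one uses that $\vp\le\peta\le\psi_{\eta_0}$ for $\eta\le\eta_0$, since $\peta\searrow\vp\in L^\infty$. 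In either case $\|F_\eta\|_{L^{1+\de}(X,\homega^n)}$ is bounded independently of $\eta$; the only point is the uniform integrability
\[
\int_X\frac{\homega^n}{\prod_j(|s_j|^2_{h_j}+\eta)^{(1+\de)(1-\be_j)}}\ \le\ \int_X\frac{\homega^n}{\prod_j|s_j|_{h_j}^{2(1+\de)(1-\be_j)}}\ <\ \infty ,
\]
which holds once $\de>0$ is chosen so small that $(1+\de)(1-\be_j)<1$ for every $j$ (possible since $\be_j\in(0,1)$), and which is exactly the finiteness of $\|\om^n/\Omega\|_{L^{1+\de}(X,\Omega)}$. Ko\l odziej's theorem (cf.\ \cite{CGP,GP}) then bounds $\operatorname{osc}_X\vpe$ uniformly, and the mass identity $\int_X\oeta^n=\int_X\homega^n$ together with the normalization used in the construction pins down the additive constant; the bound on $|c_\eta|$ drops out of the same estimates.

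For the lower bound in \eqref{c2}, i.e.\ $\operatorname{tr}_{\oeta}\homega\le C$, I would use the Chern--Lu inequality for the identity map $(X,\oeta)\to(X,\homega)$:
\[
\Delta_{\oeta}\log\operatorname{tr}_{\oeta}\homega\ \ge\ \frac{\langle Ric(\oeta),\homega\rangle_{\oeta}}{\operatorname{tr}_{\oeta}\homega}\ -\ B\,\operatorname{tr}_{\oeta}\homega ,
\]
where $B$ bounds the holomorphic bisectional curvature of $\homega$ from above, so $B=B(\|Rm(\homega)\|)$. By Lemma \ref{ric bdd weak}, $Ric(\oeta)\ge -|\la|\,\oeta-A\,\homega$ in both cases ($-A\homega$ when $\la>0$, and $\la\oeta-A\homega$ when $\la\le 0$); pairing this with the positive form $\homega$ with respect to $\oeta$, and using $\langle\oeta,\homega\rangle_{\oeta}=\operatorname{tr}_{\oeta}\homega$ together with $|\homega|^2_{\oeta}\le(\operatorname{tr}_{\oeta}\homega)^2$, one obtains $\Delta_{\oeta}\log\operatorname{tr}_{\oeta}\homega\ge -C_1-C_2\operatorname{tr}_{\oeta}\homega$ with $C_1,C_2$ depending only on $n,\la,A,\|Rm(\homega)\|$. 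Since $\eta>0$ the function $u:=\log\operatorname{tr}_{\oeta}\homega-(C_2+1)\vpe$ is smooth on the compact $X$; applying the maximum principle to it and using $\Delta_{\oeta}\vpe=n-\operatorname{tr}_{\oeta}\homega$, one finds $\operatorname{tr}_{\oeta}\homega\le C_1+(C_2+1)n$ at its maximum, and inserting the $C^0$ bound from (1) gives $\operatorname{tr}_{\oeta}\homega\le C$ everywhere.

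For the upper bound in \eqref{c2}, diagonalize $\oeta$ with respect to $\homega$ at a point, with eigenvalues $0<\la_n\le\cdots\le\la_1$. The lower bound just obtained gives $\la_i\ge C^{-1}$ for all $i$, while $\prod_i\la_i=\oeta^n/\homega^n=F_\eta\le C\,\prod_j(|s_j|^2_{h_j}+\eta)^{-(1-\be_j)}$, the inequality because $e^{-\la\peta+c_\eta}$ (resp.\ $e^{-\la\vpe+c_\eta}$) is bounded above by the $C^0$ estimate and $\Omega/\homega^n$ is bounded. Hence
\[
\la_1\ =\ \frac{\prod_i\la_i}{\la_2\cdots\la_n}\ \le\ \frac{C^n}{\prod_j(|s_j|^2_{h_j}+\eta)^{1-\be_j}} ,
\]
and since $\oeta\le\la_1\homega$ this is precisely the upper bound in \eqref{c2}.

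The main obstacle is concentrated in the $C^0$ estimate, and there only in making Ko\l odziej's bound uniform in $\eta$ -- which is where the hypothesis $\|\om^n/\Omega\|_{L^{1+\de}(X,\Omega)}<\infty$ and the elementary inequality $(|s_j|^2_{h_j}+\eta)^{-1}\le|s_j|_{h_j}^{-2}$ are used. The second order estimate is by comparison soft: given the $C^0$ bound and the Ricci lower bound of Lemma \ref{ric bdd weak}, the lower bound on $\oeta$ is the classical Chern--Lu argument, and the upper bound costs nothing beyond the arithmetic--geometric manipulation above -- in particular no control of the curvature of $\oeta$, nor of any auxiliary conical comparison metric, is required. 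One should only note, in the case $\la>0$, that it is the fixity of the class $[\oeta]=[\om]$ together with the chosen normalization that makes the additive constant in the $C^0$ bound uniform, handled exactly as in \cite{CDS1,T13}.
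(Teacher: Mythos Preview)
Your proposal is correct and follows essentially the same approach as the paper: Ko\l odziej's $L^{1+\de}$ estimate for the $C^0$ bound, the Chern--Lu inequality together with the Ricci lower bound from Lemma~\ref{ric bdd weak} and the maximum principle for $\tr_{\oeta}\homega\le C$, and then the Monge--Amp\`ere equation plus elementary eigenvalue algebra for the upper bound in \eqref{c2}. The paper phrases the last step as $\tr_{\homega}\oeta\le(\tr_{\oeta}\homega)^{n-1}\,\oeta^n/\homega^n$, which is the same AM--GM inequality you wrote out via $\la_1=\prod_i\la_i/(\la_2\cdots\la_n)$; your case analysis of the additive constant in the $C^0$ estimate is simply more explicit than the paper's one-line appeal to the uniform $L^{1+\vep}$ bound and Ko\l odziej.
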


\begin{proof} The proof is standard. We first assume that $\la>0$. The other case also follows similarly. The right hand side of equation \eqref{pert ma} is uniformly in $L^{1+\vep}(X,\omega)$ for some $\vep>0$, since all the $\be_j$'s are strictly positive, $\peta, |c_\eta|$ are uniformly bounded, and $D$ is a simple normal crossing divisor.  The $C^0$ estimate now follows directly from the work of Kolodziej \cite{K1,K2}. For the $C^2$ estimate, we consider the following quantity :
\begin{equation}
Q = \log{\tr_{\oeta}\homega} - B\vpe
\end{equation}
By Lemma \ref{ric bdd weak}, $Ric(\oeta)> -A\homega$ for some $A\gg 1$. Then by the Chern-Lu inequality, there exist constants $B,C\gg 1$ depending on $A$, the dimension $n$, and an upper bound for the bisectional curvature of $\homega$, such that 
\begin{equation*}
\Delta_\eta Q \geq \tr_{\oeta}\homega - C
\end{equation*}
By maximum principle and the uniform $C^0$ estimate, 
\begin{equation*}
\tr_{\oeta}\homega \leq C
\end{equation*}
But then using the equation \eqref{pert ma}, and an elementary arithmetic-geometric mean inequality
\begin{align*}
\tr_{\homega}\oeta &\leq (\tr_{\oeta}\homega)^{n-1}\frac{\oeta^n}{\homega^n}\\
&\leq \frac{C}{\prod_{j=1}^{N}{(|s_j|^2_{h_j} + \eta)^{(1-\be_j)}}}
\end{align*}
Note that when $\la\leq 0$, $Ric(\oeta)>\la\oeta -A\homega$, and Chern-Lu can still be applied. 
\end{proof}

Higher order estimates away from the divisor follow by standard methods (cf. \cite{PSS}). As a straightforward corollary we have,

\begin{corollary}
For the $\oeta$ constructed above 

\begin{enumerate}\label{smt conv}
\item There exists a uniform constant $A>>1$ such that $$Ric(\oeta) > -A\oeta$$
\item  Locally on $X\setminus D$, as $\eta \rightarrow 0$, $$\oeta \xrightarrow{C^{\infty}_{loc}(X\setminus D)} \om$$
\item For all open sets $U \subset X$, $$Vol_{\oeta}(U) \xrightarrow{\eta\rightarrow 0} Vol_{\omega}(U)$$
\end{enumerate}
\end{corollary}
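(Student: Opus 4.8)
The plan is to handle the three assertions in turn, each being a short consequence of the estimates already established. For item (1), I would combine Lemma \ref{ric bdd weak} with the lower bound $\oeta>C^{-1}\homega$ of Proposition \ref{estimate}, i.e.\ $\homega<C\oeta$, whence $-A\homega>-AC\oeta$: the bound $Ric(\oeta)>-A\homega$ obtained for $\la>0$ becomes $Ric(\oeta)>-AC\oeta$, and the bound $Ric(\oeta)>\la\oeta-A\homega$ obtained for $\la\le0$ becomes $Ric(\oeta)>(\la-AC)\oeta$; enlarging the constant gives $Ric(\oeta)>-A\oeta$ in either case.

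For item (2), fix a compact $K\subset\subset X\setminus D$. On $K$ the weight $\prod_j(|s_j|^2_{h_j}+\eta)^{-(1-\be_j)}$ is bounded above uniformly in $\eta$, so \eqref{c2} sharpens there to a two-sided bound $C^{-1}\homega\le\oeta\le C_K\homega$, and \eqref{pert ma} (resp.\ \eqref{pert ma easy}) becomes uniformly elliptic over $K$. With this and the estimates of Proposition \ref{estimate} and Lemma \ref{ric bdd weak} in hand, the Calabi-type higher-order estimates of \cite{PSS} plus the usual Schauder bootstrap yield uniform $C^k(K)$ bounds on $\oeta$ for every $k$, {\em provided} the right-hand side of the equation is uniformly bounded in $C^\infty(K)$. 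When $\la\le0$ this is automatic, the solution entering the right-hand side only through the factor $e^{-\la\vpe}$; when $\la>0$ it involves $e^{-\la\peta}$, and here one arranges the regularization so that, besides $\peta\searrow\vp$, also $\peta\to\vp$ in $C^\infty_{loc}(X\setminus D)$ --- possible since $\vp$ is smooth there --- which again controls the right-hand side on $K$. One then extracts a subsequence converging in $C^\infty_{loc}(X\setminus D)$, whose limit is $\tilde\om=\homega+\ddbar\tilde\vp$ with $\tilde\vp$ bounded and $\homega$-psh (the $\vpe$ being uniformly $C^0$-bounded); since $\oeta^n$ converges pointwise on $X\setminus D$ to the right-hand side of \eqref{ke ma}, i.e.\ to $\om^n$ (by $\peta\searrow\vp$, $c_\eta\to0$, and, for $\la\le0$, the $C^0$-stability of the complex Monge--Amp\`ere equation \cite{K1,K2}), one gets $\tilde\om^n=\om^n$ on $X\setminus D$, hence on all of $X$ ($D$ being pluripolar and both potentials bounded). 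Uniqueness of bounded solutions with $L^{1+\vep}$ density \cite{K1,K2} then forces $\tilde\om=\om$, and since the limit is independent of the subsequence, $\oeta\to\om$ in $C^\infty_{loc}(X\setminus D)$. The step that is not purely formal here is keeping the right-hand side of the perturbed equation under uniform $C^\infty$ control on compact subsets of $X\setminus D$ --- for $\la>0$ this is precisely what dictates the choice of a regularization $\peta$ converging smoothly away from $D$ --- together with the identification of the locally smooth limit via Monge--Amp\`ere uniqueness; I expect this to be the main technical point.

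For item (3), by part (2) one has $\oeta^n\to\om^n$ in $C^\infty_{loc}(X\setminus D)$, so the densities $F_\eta:=\oeta^n/\homega^n$ converge a.e.\ on $X$ to $F:=\om^n/\homega^n$ ($D$ having measure zero); moreover $\{F_\eta\}$ is uniformly bounded in $L^{1+\vep}(X,\homega)$ (as in the proof of Proposition \ref{estimate}), hence uniformly integrable, so Vitali's convergence theorem upgrades this to $F_\eta\to F$ in $L^1(X,\homega)$. Consequently $Vol_{\oeta}(U)=\int_U F_\eta\,\homega^n\to\int_U F\,\homega^n=Vol_\omega(U)$ for every open $U\subset X$. (Since also $\int_X\oeta^n=\int_X\homega^n=\int_X\om^n$, Scheff\'e's lemma would serve in place of the $L^{1+\vep}$ bound.)
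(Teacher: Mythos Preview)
Your argument is correct and is exactly what the paper intends: the paper offers no proof beyond the line ``Higher order estimates away from the divisor follow by standard methods (cf.~\cite{PSS}). As a straightforward corollary we have,'' and you have supplied precisely the elided details. Your observation that, for $\la>0$, the regularization $\psi_\eta$ should also converge in $C^\infty_{loc}(X\setminus D)$ (not merely pointwise) for the Schauder bootstrap to go through is a legitimate subtlety that the paper does not explicitly address.
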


Next, we prove a uniform diameter bound along the sequence $(X,\oeta)$. In the case of just one smooth divisorial component, this follows directly from the $C^2$ estimate in Proposition \ref{estimate} (cf. \cite{CDS1}). For the general simple normal crossing case, we instead use the work of Cheeger-Colding, and exploit the fact that the limiting conical metric has a bounded diameter.  

\begin{proposition}
There exists a $\Lambda > > 1$ such that $$diam(X,\deta)<\Lambda$$
\end{proposition}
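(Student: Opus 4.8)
The plan is to argue by contradiction, replacing the missing control on $\omega_{\eta_i}$‑distances near the crossings of $D$ by a volume argument based on the work of Cheeger--Colding. Suppose no such $\Lambda$ exists; after passing to a subsequence we may assume $\mathrm{diam}(X,d_{\eta_i})\to\infty$, where $d_{\eta_i}$ is the distance of $\omega_{\eta_i}$. The first ingredient is a list of properties of the limiting metric $\om$. By \cite{CGP,GP,DS}, $\om$ is locally (near $D$) uniformly quasi-isometric to the model edge metric \eqref{edge}, which gives: (i) $(X\setminus D,d_\om)$ has finite diameter $\Lambda_0$, so its completion $(X,d)$ is compact; (ii) writing $T_\rho=\{\prod_j|s_j|_{h_j}^{2}<\rho\}$ and $K_\rho=X\setminus T_\rho$, one has $\mathrm{Vol}_{\om}(T_\rho)\to0$ as $\rho\to0$, since $D$ is $\om$‑null; and, crucially, (iii) there is a constant $C_0$, \emph{independent of $\rho$}, such that the compact set $K_\rho\subset X\setminus D$, with the length metric induced by $\om$, has diameter $\le C_0$ for all small $\rho$. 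For (iii) one moves $x,y\in K_\rho$ ``outward'' coordinate by coordinate inside a fixed coordinate chart: along such paths $\prod_j|s_j|_{h_j}$ is nondecreasing, so the path stays in $K_\rho$, and since \eqref{edge} is a \emph{sum} of one‑variable cone metrics its $\om$‑length is $\le\sum_j\delta_0^{\beta_j}/\beta_j$ uniformly; this brings $x$ and $y$ into a fixed compact subset of $X\setminus D$, whose finite intrinsic $\om$‑diameter we add on. By contrast, the a priori estimate \eqref{c2} only bounds $\omega_{\eta_i}$ by $\homega\big/\prod_j(|s_j|_{h_j}^{2}+\eta_i)^{1-\beta_j}$, which near the crossings is far larger than the true (sum‑type) size; this is exactly why the single‑divisor argument does not extend and why (iii) must be extracted from the genuine edge asymptotics rather than from \eqref{c2}.

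Next, fix a smooth point $p\in X\setminus D$, so $p\in K_\rho$ for all small $\rho$. Since $\mathrm{Ric}(\omega_{\eta_i})\ge-\Lambda_1\omega_{\eta_i}$ uniformly (see the Corollary following Proposition~\ref{estimate}), Gromov's precompactness theorem lets us pass, along a subsequence, to a pointed Gromov--Hausdorff limit $(Z,d_Z,z_0)$ of $(X,d_{\eta_i},p)$, along which the Riemannian volume measures $\mathrm{Vol}_{\omega_{\eta_i}}$ converge weakly to a Radon measure $\mu$ on $Z$ (Cheeger--Colding). Two features will be used. First, $Z$ is \emph{unbounded}: each $(X,d_{\eta_i})$ is a compact geodesic space with diameter tending to infinity, so $Z$ contains points at every distance from $z_0$. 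Second, the convergence is non‑collapsed near $z_0$: for $r$ small, $B_{\eta_i}(p,r)$ stays in a fixed compact subset of $X\setminus D$, on which $\omega_{\eta_i}\to\om$ smoothly, so $\liminf_i\mathrm{Vol}_{\omega_{\eta_i}}(B_{\eta_i}(p,1))\ge v_0>0$. Also set $V:=\mathrm{Vol}_{\omega_{\eta_i}}(X)$, which is independent of $i$.

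The contradiction now comes from two estimates for $\mu$. On one hand, for each small $\rho$, smooth convergence on $K_\rho$ and (iii) give $d_{\eta_i}(x,y)\le2C_0$ for all $x,y\in K_\rho$ and $i$ large; since $p\in K_\rho$, the image of $K_\rho$ under the convergence lies in $\overline{B_{d_Z}(z_0,2C_0)}$ and carries $\mu$‑mass $\ge\limsup_i\mathrm{Vol}_{\omega_{\eta_i}}(K_\rho)=V-\mathrm{Vol}_{\om}(T_\rho)$. Letting $\rho\to0$ and using (ii), $\mu(\overline{B_{d_Z}(z_0,2C_0)})\ge V=\mu(Z)$, so $\mu$ is supported in $\overline{B_{d_Z}(z_0,2C_0)}$. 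On the other hand, the Bishop--Gromov relative volume comparison applied from $p$ gives, for any $z^{*}\in Z$ and $z_i^{*}\to z^{*}$,
\[
\mathrm{Vol}_{\omega_{\eta_i}}\big(B_{\eta_i}(z_i^{*},1)\big)\ \ge\ \frac{\mathcal V(1)}{\mathcal V\big(d_{\eta_i}(p,z_i^{*})+1\big)}\,\mathrm{Vol}_{\omega_{\eta_i}}\big(B_{\eta_i}(p,1)\big),
\]
since $B_{\eta_i}(z_i^{*},d_{\eta_i}(p,z_i^{*})+1)\supseteq B_{\eta_i}(p,1)$, where $\mathcal V(r)$ denotes the volume of an $r$‑ball in the model space form for the lower Ricci bound. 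Letting $i\to\infty$ (the right‑hand side tends to a positive limit, by non‑collapsing at $p$ and since $d_{\eta_i}(p,z_i^{*})\to d_Z(z_0,z^{*})<\infty$) yields $\mu(\overline{B_{d_Z}(z^{*},2)})>0$ for \emph{every} $z^{*}\in Z$. Choosing $z^{*}$ with $d_Z(z_0,z^{*})=2C_0+10$, which is possible because $Z$ is unbounded, the ball $\overline{B_{d_Z}(z^{*},2)}$ lies outside $\overline{B_{d_Z}(z_0,2C_0)}$ and is therefore $\mu$‑null --- a contradiction. Hence $\mathrm{diam}(X,d_{\eta_i})$ is uniformly bounded, which is the assertion.

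The step I expect to be the main obstacle is making the two measure estimates rigorous in the non‑compact pointed limit: that the bulk mass of $K_\rho$ genuinely lands in a bounded region of $Z$ and does not leak to infinity, and conversely that no metric ball of $Z$ is $\mu$‑null. Upstream of that, one also needs (iii) --- the uniform bound on the intrinsic diameter of $K_\rho$ --- which, as noted, has to come from the edge asymptotics of $\om$ in \cite{CGP,GP,DS} and not from the weak estimate \eqref{c2}.
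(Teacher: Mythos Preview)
Your argument is correct, but it takes a different and somewhat longer route than the paper's. The paper does not argue by contradiction and does not need your ingredient (iii). Instead it passes directly to a pointed Gromov--Hausdorff limit $(X_\infty,d_\infty,p_\infty)$ and uses smooth convergence on $X\setminus D$ to produce an injective local isometry $i:X\setminus D\to X_\infty$ with image $X_0$ open and satisfying the Lipschitz bound $d_\infty(i(x),i(y))\le d_\omega(x,y)$. Non-collapsing at $p$ and Cheeger--Colding volume convergence give $\mathcal H^{2n}_\infty(X_\infty)=[\hat\omega]^n/n!$; the same volume is carried by $X_0$, so $X_0$ is of full measure and hence dense. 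The Lipschitz bound then yields $\mathrm{diam}(X_\infty,d_\infty)\le\mathrm{diam}(X,d)<\infty$, which forces $\mathrm{diam}(X,d_\eta)$ to be uniformly bounded.

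The comparison: both proofs use the same heavy machinery (Ricci lower bound, non-collapsing, Cheeger--Colding volume convergence, and the finite diameter of $(X,d)$ coming from the edge asymptotics of $\omega$). Your proof trades the embedding $i$ and its Lipschitz bound for the geometric statement (iii) that the \emph{intrinsic} $\omega$-diameter of $K_\rho$ is bounded independently of $\rho$, which you then feed into a measure-concentration-versus-Bishop--Gromov contradiction. Your (iii) is correct and your coordinate-by-coordinate outward push justifies it, but it is strictly stronger than what the paper needs: the paper only uses $\mathrm{diam}(X,d)<\infty$, transferred to $X_\infty$ via the $1$-Lipschitz map $i$ and the density of $X_0$. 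In short, the paper's approach is shorter because the embedding $i$ packages both the ``mass stays in a bounded region'' and the ``no ball of the limit is null'' parts of your argument into a single density statement; your approach is more hands-on and would generalize more readily to situations where such an embedding is not available.
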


\begin{proof}
By Gromov's compactness theorem \cite{Gr} $$(X,\deta,p) \xrightarrow{d_{GH}} (X_\infty,\dinf,p_\infty)$$ where $p\in X\setminus D$ is a fixed point, and $X_\infty$ is a metric length space. It follows from smooth convergence away from the divisor that $X\setminus D$ can be identified as an open subset of $X_\infty$. More precisely, there exists an injective local isometry $i : X\setminus D \rightarrow X_\infty$ such that 
\begin{equation}\label{Lip}
\dinf(i(x),i(y)) \leq d_{\omega}(x,y)
\end{equation}
and $i(X\setminus D) = X_0$ is an open subset of $X_\infty$. The Lipschitz bound above follows from the fact that the intrinsic distance is bigger than the extrinsic distance. Moreover, one has uniform non collapsing at $p$,  $$Vol(B_{\deta}(p,1))\geq \nu > 0$$ and hence by volume convergence \cite{CC1},
\begin{align*}
\mathcal{H}^{2n}_{\infty}(X_\infty) &= \lim_{\eta\rightarrow 0}Vol_{\oeta}(X)\\
&=\frac{[\homega]^n}{n!}
\end{align*} 
where $\mathcal{H}^{2n}_{\infty}$ denotes the $2n$-dimensional Hausdorff measure with respect to $\dinf$. Again due to smooth convergence away from $D$ and Cheeger-Colding volume convergence, the restriction of Hausdorff measure of $(X_\infty,\dinf)$ on $X_0$ agrees with the induced Riemannian measure from $X\setminus D$. But then, $$\mathcal{H}^{2n}_{\infty}(X_0) = \int_{X\setminus D}{\frac{\omega^n}{n!}} = \lim_{\eta\rightarrow 0}\int_{X}{\frac{\oeta^n}{n!}} = \frac{[\homega]^n}{n!} = \mathcal{H}^{2n}_{\infty}(X_\infty)$$ 
That is $X_0$ is of full measure inside $X_\infty$, and in particular, is dense. Then by the Lipschitz estimate \eqref{Lip} above, $diam(X_\infty,\dinf) \leq diam(X,d)$, where recall that $d$ is the distance induced by the conical metric $\omega$. But $(X,d)$ has finite diameter, since $\omega$ is quasi-isometric to a standard cone metric. Hence, $(X_\infty,\dinf)$ has a finite diameter, and since $(X,\deta,p) \xrightarrow{d_{GH}} (X_\infty,\dinf,p_\infty)$, $diam(X,d_\eta)$ is uniformly bounded.
\end{proof}

\section{\bf Almost geodesic convexity and Gromov-Hausdorff convergence.}

To complete the proof of Proposition \ref{smoothening}, we need to identify $X_\infty$ with the metric completion $(X,d)$ of $(X\setminus D, d_\omega)$. The proof follows along the lines of \cite{Z,T13,DGSW}. The main technical ingredient is the following relative comparison lemma of Gromov \cite{Gr}.

\begin{lemma}\label{gromov}
Let $(M,g)$, be a Riemannian manifold of dimension $m$ and $T\subset M$ be any compact set with a smooth boundary, such that  
\begin{itemize}
\item $$Ric(g)>-Ag~;~diam(M,g)<\Lambda$$
\item For some points $p_1,p_2 \in M$ with $B(p_j,\vep) \cap T = \emptyset$ for $j=1,2$, every minimal geodesic from $p_1$ to points in $B(p_2,\vep)$ intersects $T$.
\end{itemize}
Then, there exists a constant $c = c(n,\vep,A,\Lambda)$ such that 

\begin{equation*}
Vol(\partial T,g) > c Vol(B(p_2,\vep),g)
\end{equation*}
\end{lemma}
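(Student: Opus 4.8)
The plan is to parametrize almost all of $B(p_2,\vep)$ by the exit points on $\partial T$ of minimal geodesics issuing from $p_1$, and then bound the Jacobian of that parametrization by Bishop's volume comparison; this is the classical argument of Gromov. Write $d_1 = d(p_1,\cdot)$ and let $\partial_r = \nabla d_1$ be the radial unit field, which is smooth on $M \setminus (\{p_1\} \cup \mathrm{Cut}(p_1))$.

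First I would set up the exit map. Given $q \in B(p_2,\vep)$, fix a minimal geodesic $\sigma_q$ from $p_1$ to $q$. Since $B(p_1,\vep)\cap T = B(p_2,\vep)\cap T = \emptyset$, neither endpoint of $\sigma_q$ lies in $T$, while $\sigma_q$ meets $T$ by hypothesis; hence $t_*(q) := \sup\{t : \sigma_q(t)\in T\}$ lies strictly between $0$ and $d(p_1,q)$, and $y(q) := \sigma_q(t_*(q)) \in \partial T$. As sub-arcs of minimal geodesics are minimal, $d(p_1,y(q)) = t_*(q)$ and $\ell(q) := d(y(q),q) = d(p_1,q) - t_*(q) > 0$, and the diameter bound together with $B(p_1,\vep)\cap T = \emptyset$ gives $\vep \le d(p_1,y(q)) \le d(p_1,q) < \Lambda$. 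For $q \notin \mathrm{Cut}(p_1)$, which excludes only a null set, $\sigma_q$ avoids the cut locus before $q$, so $y(q)\notin\mathrm{Cut}(p_1)$ and $\dot\sigma_q(t_*(q)) = \partial_r(y(q))$; thus $q = \Psi(y(q),\ell(q))$ for the map $\Psi(y,\ell) := \exp_y(\ell\,\partial_r(y))$. Consequently, up to a null set, $B(p_2,\vep)$ is covered by $\Psi$ restricted to the open set $\Omega$ of pairs $(y,\ell)$ with $y\in\partial T\setminus\mathrm{Cut}(p_1)$, $\ell>0$, and $\Psi(y,\ell)\notin\mathrm{Cut}(p_1)$; note $\Omega\subseteq\partial T\times(0,\Lambda)$ since points off $\mathrm{Cut}(p_1)$ realize their distance to $p_1$.

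Next I would bound $|\mathrm{Jac}\,\Psi|$ on $\Omega$. In geodesic polar coordinates at $p_1$ the Riemannian measure is $\mathcal{A}(r,u)\,dr\,d\sigma(u)$, and since $\ell\mapsto\Psi(y,\ell)$ is the continuation past $y$ of the radial geodesic through $y$, it reads $(d_1(y)+\ell,\,u(y))$ in these coordinates, with $u(y)\in U_{p_1}M$ the initial direction to $y$. A wedge-of-forms computation then gives $|\mathrm{Jac}\,\Psi|(y,\ell) = \mathcal{A}(d_1(y)+\ell,u(y))\,J(y)$ with $J(y)$ independent of $\ell$; evaluating at $\ell=0$, where $\Psi(\cdot,0) = \mathrm{id}_{\partial T}$, yields $\mathcal{A}(d_1(y),u(y))\,J(y) = |\langle\partial_r(y),\nu(y)\rangle| \le 1$, $\nu$ the unit normal of $\partial T$. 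Since $d_1(y)\ge\vep$ keeps $\mathcal{A}(d_1(y),u(y))>0$,
$$|\mathrm{Jac}\,\Psi|(y,\ell)\ \le\ \frac{\mathcal{A}(d_1(y)+\ell,\,u(y))}{\mathcal{A}(d_1(y),\,u(y))}\ \le\ \left(\frac{\mathrm{sn}_A(\Lambda)}{\mathrm{sn}_A(\vep)}\right)^{\!m-1}\ =:\ C_0(m,A,\vep,\Lambda),$$
where $\mathrm{sn}_A(r) = \sqrt{\tfrac{m-1}{A}}\,\sinh\!\big(\sqrt{\tfrac{A}{m-1}}\,r\big)$ and the middle inequality is Bishop's comparison under $\ric(g)\ge -Ag$ (monotonicity of $r\mapsto\mathcal{A}(r,u)/\mathrm{sn}_A(r)^{m-1}$), valid because $\vep\le d_1(y)\le d_1(y)+\ell<\Lambda$ on $\Omega$.

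Finally, the area formula (the covering multiplicity only helps) gives
$$Vol(B(p_2,\vep),g) = Vol\big(B(p_2,\vep)\setminus\mathrm{Cut}(p_1)\big) \le \int_\Omega |\mathrm{Jac}\,\Psi|\,dA_{\partial T}\,d\ell \le C_0\,\Lambda\,Vol(\partial T,g),$$
so the lemma holds with, say, $c = 1/(2C_0\Lambda)$ (using that $B(p_2,\vep)$ has positive volume). The only points requiring care are the cut-locus bookkeeping in the first step — that for a.e.\ $q$ the exit point inherits the radial direction and the relevant geodesic stays in the polar chart — and the Jacobian identity in the second; neither is a genuine obstacle, the entire content being Bishop–Gromov monotonicity applied along radial geodesics crossing $\partial T$.
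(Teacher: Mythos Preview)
Your argument is correct and is precisely Gromov's classical proof: parametrize $B(p_2,\vep)$ (up to the null set $\mathrm{Cut}(p_1)$) by last-exit points on $\partial T$ together with the remaining radial length, and bound the Jacobian of this parametrization by the Bishop area comparison $\mathcal{A}(r_2,u)/\mathcal{A}(r_1,u)\le (\mathrm{sn}_A(r_2)/\mathrm{sn}_A(r_1))^{m-1}$, using $\vep\le r_1\le r_2<\Lambda$. The cut-locus bookkeeping and the $\ell=0$ normalization yielding $|\langle\partial_r,\nu\rangle|\le 1$ are handled correctly, and the final area-formula step is in the right direction (the bound only improves if one integrates over $\partial T\setminus\mathrm{Cut}(p_1)$ rather than all of $\partial T$).

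As for the comparison: the paper does not prove this lemma at all. It is quoted directly from Gromov's book \cite{Gr} as a known ``relative comparison lemma'' and used as a black box in the proof of Lemma~\ref{alm. conv}. So there is no alternative approach in the paper to compare against; you have supplied exactly the standard proof that the citation points to.
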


The next lemma proves that for almost all points, $X\setminus D$ is geodesic convex. This was proved by Cheeger-Colding \cite{CC2} for Gromov-Hausdorff limits of Riemannian manifolds with a Ricci lower bound. In our case, we haven't yet identified the Gromov-Haudorff limit with $X$, and so we give an elementary proof using the above comparison lemma and smooth convergence on $X\setminus D$.
\begin{lemma}\label{alm. conv}
Let $K \subset \subset X\setminus D$, and $d(\partial K,D)>4\vep$. Then there exists a $\delta = \delta(n,\vep,A,\Lambda)$, such that if $T$ is a neighborhood of $D$ in $X\backslash K$ with $d(p,\partial T) > 2\vep ~\forall ~p\in K$, and $Vol_{d}(\partial T) < \delta$, then, for all $p,q\in K$, there exists a $q^{\prime} \in B_{d}(q,\vep)$ and a minimal $d$-geodesic $\ga^{d}_{pq^{\prime}} : [0,l] \rightarrow X\backslash T$ connecting $p$ to $q^{\prime}$.
\end{lemma}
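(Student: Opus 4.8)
The plan is to argue by contradiction and transfer the hypothesis on the limiting conical metric $\omega$ to the approximating metrics $\oeta$, where Gromov's comparison Lemma \ref{gromov} applies. Suppose the conclusion fails: there exist $p,q\in K$ such that \emph{every} minimal $d$-geodesic from $p$ to \emph{every} point of $B_d(q,\vep)$ must pass through $T$. Since $K$ is compactly contained in $X\setminus D$, where $\oeta\to\om$ in $C^\infty_{loc}$ and $(X,\deta)\xrightarrow{d_{GH}}(X,d)$ (Corollary \ref{smt conv} and the diameter bound), I would first choose $\eta$ small enough that $\deta$ and $d$ are uniformly close on a fixed neighborhood of $K$, and that the $\oeta$-volume of a smooth compact ``collar'' $T_\eta\supset D$ with smooth boundary, obtained by slightly shrinking $T$, is still bounded by roughly $2\delta$ (using $Vol_{\oeta}(U)\to Vol_\omega(U)$ for open $U$). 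The key point is that for $\eta$ small, every minimal $\deta$-geodesic from $p$ to $B_{\deta}(q,\vep/2)$ still intersects $T_\eta$: if some $\deta$-minimal geodesic avoided $T_\eta$, it would lie in a region where $\oeta$ is $C^0$-close to $\om$, so by length comparison it would yield (after passing to a limit, or by a direct perturbation argument) a $d$-nearly-minimal path from $p$ to near $q$ avoiding $T$, which could then be shortened to a genuine minimal $d$-geodesic avoiding $T$ — contradicting the assumption. Here the hypotheses $d(\partial K,D)>4\vep$ and $d(p,\partial T)>2\vep$ give the room needed so that the endpoints stay at definite distance from $T_\eta$ and the $\vep$-balls in the two metrics are comparable.

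With this in hand, Lemma \ref{gromov} applied to $(X,\oeta)$, with the compact set $T_\eta$, the points $p$ and $q$, and radius $\vep/2$, yields
\begin{equation*}
Vol_{\oeta}(\partial T_\eta) > c\, Vol_{\oeta}\big(B_{\deta}(q,\vep/2)\big),
\end{equation*}
where $c=c(n,\vep,A,\Lambda)$ is the uniform constant coming from the Ricci lower bound $Ric(\oeta)>-A\oeta$ and the diameter bound $diam(X,\deta)<\Lambda$ from Proposition \ref{smoothening}. The right-hand side is bounded below uniformly in $\eta$: since $q\in K\subset\subset X\setminus D$ and $\oeta$ converges smoothly to $\om$ near $q$, the balls $B_{\deta}(q,\vep/2)$ contain a fixed small $\om$-ball and hence have $\oeta$-volume bounded below by some $\nu=\nu(\vep)>0$. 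Thus $Vol_{\oeta}(\partial T_\eta) > c\nu$. Now I would choose $\delta$ from the start so that $\delta < c\nu/4$, say; combined with $Vol_{\oeta}(\partial T_\eta)$ being close to $Vol_d(\partial T)<\delta$ (again by volume convergence and the smooth convergence of $\oeta$ to $\om$ in a neighborhood of $\partial T$, which sits in $X\setminus D$), this gives $c\nu < Vol_{\oeta}(\partial T_\eta) \le 2\delta < c\nu/2$, a contradiction. Hence the desired $q'\in B_d(q,\vep)$ and minimal $d$-geodesic $\ga^d_{pq'}\subset X\setminus T$ exist.

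The main obstacle is the implication ``$\deta$-minimal geodesics avoid $T_\eta$ $\Rightarrow$ a $d$-minimal geodesic avoids $T$.'' One has to be careful that minimality is not an open condition: a sequence of $\deta$-minimal geodesics avoiding $T_\eta$ could, in the limit, develop a piece inside $D$ or fail to remain minimal. The clean way around this is to phrase the argument purely at the level of the $\oeta$: assume for contradiction that for \emph{every} sufficiently small $\eta$ some $\deta$-minimal geodesic from $p$ to $B_{\deta}(q,\vep/2)$ \emph{avoids} $T_\eta$; then run the volume estimate to reach a contradiction directly for $\oeta$, so that in fact for small $\eta$ all such geodesics meet $T_\eta$ — and only \emph{then} pass to the limit, using that a uniform ``stays at distance $>2\vep$ from $T_\eta$'' bound on the relevant geodesic segments survives Gromov--Hausdorff convergence (via the smooth convergence on $X\setminus D$ and an Arzelà--Ascoli extraction), producing a $d$-minimal geodesic disjoint from $T$. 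Getting the quantifiers and the choice of $\delta$, $\eta$ in the right order is the delicate bookkeeping; the geometric input is entirely Lemma \ref{gromov} plus volume convergence.
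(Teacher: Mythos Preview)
Your ingredients are right (Gromov's lemma at the $\oeta$-level, volume convergence, Arzel\`a--Ascoli), but the logical assembly has a genuine gap, and your final paragraph inverts the direction of the argument.

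The gap is the transfer step. You assume for contradiction that no $d$-minimal geodesic from $p$ to any point of $B_d(q,\vep)$ avoids $T$, and then claim that for small $\eta$ no $\deta$-minimal geodesic from $p$ to $B_{\deta}(q,\vep/2)$ avoids $T_\eta$. Your justification is that an $\eta$-minimal geodesic avoiding $T_\eta$ would give a ``$d$-nearly-minimal path \dots\ which could then be shortened to a genuine minimal $d$-geodesic avoiding $T$.'' That shortening is exactly what you cannot do: a nearly-minimal curve avoiding $T$ does not, in general, produce a minimal geodesic avoiding $T$; the shortest replacement might well dive into $T$. In your last paragraph you try to repair this, but the quantifiers are reversed: you ``assume for contradiction that \dots\ some $\deta$-minimal geodesic \dots\ avoids $T_\eta$'' and then ``run the volume estimate.'' Gromov's lemma only fires when \emph{all} such geodesics hit $T$, so this yields nothing; and the subsequent sentence (``all such geodesics meet $T_\eta$ --- and only then pass to the limit \dots\ producing a $d$-minimal geodesic disjoint from $T$'') is self-contradictory.

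The paper avoids this by \emph{not} setting up a contradiction at the $d$-level. It works forwards: first (Claim~2) it applies Gromov's lemma directly to $(X,\oeta)$ to produce, for each small $\eta$, an $\eta$-minimal geodesic $\ga^\eta_{pq_\eta}\subset X\setminus T$ with $q_\eta\in B_{\deta}(q,\vep/2)$; then it extracts a limit curve $\ga_{pq'}\subset X\setminus T$ by Arzel\`a--Ascoli; and finally (Claim~3) it proves separately that this limit is $d$-minimal, by comparing lengths against an arbitrary competitor $\tilde\ga_{pq'}\subset X\setminus D$ and using smooth convergence of $\oeta$ to $\omega$ along both $\tilde\ga_{pq'}$ and a short segment $\ga^d_{q'q_\eta}$. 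This last step --- that a limit of $\eta$-minimal geodesics in $X\setminus T$ is actually $d$-minimal --- is precisely the content you are missing; once you have it, the contradiction wrapper is unnecessary.
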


\begin{proof}

\noindent\textit{\textbf{Claim 1:}} If $\eta<\eta_0(\vep)$, then $$B_{\deta}(q,\vep/2)\subset B_{d}(q,\vep)$$ for all $q\in K$. 

\noindent Suppose not, then for arbitrarily small $\eta$, there exists an $x \in X$ such that $d_{\eta}(q,x)< \vep/2$, but $d(q,x)>\vep$. The minimal $\eta$-geodesic $\ga^{\eta}_{qx}$ has a first point of contact $\tilde x\in \partial B_{d}(q,\vep)$. Then $\sL_{\omega}(\ga^{\eta}_{q\tilde x})\geq\vep$, and hence $d_{\eta}(q,\tilde x) = \sL_{\eta}(\ga_{q,\tilde x})>3\vep/4$ if $\eta$ is sufficiently small, by uniform smooth convergence on $X\backslash T$ and the fact that $\ga^{\eta}_{qx}$ is minimal. This is a contradiction and the claim is proved. \\

\noindent{\textit{\textbf{Claim-2:}}} There exists a $\delta = \delta(n,\vep,A,\Lambda)$ such that, for $\eta < \eta_0(\delta)$ small, any $p,q \in K$ and any $T$ satisfying $Vol_{\omega}(\partial T)<\delta$, there exists $q_\eta \in B_{\deta}(q,\vep/2)$ and a minimal unit speed $\oeta$-geodesic $\ga^{\eta}_{pq_\eta} : [0,l_\eta] \rightarrow X\backslash T$. 

\noindent If not, then by volume comparison, diameter bound, Lemma \ref{gromov}, and volume convergence (cf. Lemma \ref{smt conv}), 
$$ c\kappa \vep^{2n} \leq cVol_{\oeta}(B_{\deta}(q,\vep/2)) \leq Vol_{\oeta}(\partial T) \leq 2 Vol_{\om}(\partial T) \leq 2\delta$$
Pick $\delta = c\kappa\vep^{2n}/4$ to get a contradiction.

So there is a sequence of points $q_\eta \in B_{\deta}(q,\vep/2)\subset B_{d}(q,\vep)$ and $\eta$-minimal geodesics $\ga^{\eta}_{pq_\eta} \subset X\backslash T$. Since the convergence is smooth on $X\backslash T$ and the diameter is uniformly bounded, by Ascoli-Arzela there exists a $q^{\prime} \in B_{d}(q,\vep)$ and a limiting geodesic $\ga_{pq^{\prime}} :[0,l]\rightarrow X\backslash T$ from $p$ to $q'$. \\

\noindent{\textbf{\textit{Claim-3:}}}  $\ga_{pq^{\prime}} $ is $d$-minimal. i.e $$\sL_{\omega}(\ga_{pq^\prime})= d(p,q')$$. 

If not, then by definition of $d$, there exists a path $\tilde \ga_{pq^{\prime}} : [0,1] \rightarrow X\setminus D$ such that $\sL_{\omega}(\tilde \ga_{pq^\prime}) < \sL_{\omega}(\ga_{pq^\prime}) - \zeta$, for some $\zeta>0$. For $\eta$ small, $d(q',q_\eta)<\zeta/8$. The minimal $d$-geodesic $\ga^{d}_{q'q_\eta}$ doesn't hit $\partial T$. So once again by smooth convergence $\sL_{\eta}(\ga^{d}_{q'q_\eta}) < \zeta/4$.  On the other hand, for $\eta$ small, $$\sL_{\eta}(\tilde\ga_{pq^\prime}) < \sL_{\omega}(\tilde\ga_{pq^\prime}) + \zeta/8 < \sL_{\omega}(\ga_{pq^{\prime}}) - 7\zeta/8 < \sL_{\eta}(\ga^{\eta}_{pq_\eta}) - 6\zeta/8$$

So the concatenation $ \tilde \ga_{pq^\prime}\cdot\ga^{d}_{q'q_\eta}$ is a path from $p$ to $q_\eta$ with length $\sL_{\eta}(\tilde \ga_{pq^\prime}\cdot\ga^{d}_{q'q_\eta}) <  \sL_{\eta}(\ga^{\eta}_{pq_\eta}) - 6\zeta/8 + \zeta/4 = \sL_{\eta}(\ga^{\eta}_{pq_\eta})  -\zeta/2$, contradicting the minimality of $\ga^{\eta}_{pq_\eta}$. Hence $\sL_{\omega}(\ga_{pq^\prime})= d(p,q')$.
\end{proof}

\bigskip

\begin{proof}[\textbf{Proof of  Proposition \ref{smoothening}}]
Fix a small $\vep>0$, and choose a tubular neighborhood $E$ of $D$ such that $K = X\backslash E$ is $\vep$-dense with respect to the distance $d$ and $Vol(E,\om) < \vep^{4n}$. The proof of the Gromov-Hausdorff convergence is completed in two steps: \\

\noindent\textbf{\textit{Claim-1:}} There exists a $\eta_0 = \eta_0(\vep)>0$ such that $\forall \eta < \eta_0$, $K$ is $\vep$-dense with respect to $\deta$.

\noindent\textit{Proof}.
If not, then there exists a sequence $p_\eta\in E$ such that $B_{\deta}(p_\eta,\vep) \subset E$. Using volume comparison, diameter bound and the fact that volumes of balls converge, for some uniform $\kappa>0$ and $\eta$ small,
\begin{equation*}
\kappa \vep^{2n} < Vol_{\oeta}(B_{\deta}(p_\eta,\vep)) < Vol_{\oeta}(E) < 2Vol_{\om}(E) < 2\vep^{4n}
\end{equation*}
 which is a contradiction if $\vep$ is small.  \\

\noindent\textbf{\textit{Claim-2:}} There exists a $\eta_0 = \eta_0(\vep)>0$ such that $\forall \eta < \eta_0$ and for all $p,q\in K$, $$|\deta(p,q) - d(p,q)| < \vep$$
\noindent{\textit{Proof}}. Let $\tilde \vep = d(\partial K, D)/4$, so that in particular $\tilde \vep < \vep/4$. We first claim that a neighborhood  $T$ of $D$ can be chosen with $Vol(\partial T,\omega)$ arbitrarily small. This can be done because $D$ has real co-dimension two. For a unit polydisc in $\CC^n$ with a model edge metric with cone angle $2\pi\be_j$ along $[z_j=0]$, such a neighborhood can be constructed explicitly. One can then glue together these local neighborhoods to obtain a neighborhood of $D$ in $X$ with the volume of the boundary arbitrarily small. In particular one can construct a $T$ such that $d(\partial T,K)>2\tvep$ and $Vol(\partial T,\omega) < \delta$ where $\delta = \delta(n,\tvep,A,\Lambda)$ is the constant in Lemma \ref{alm. conv}. 

Next, by Lemma \ref{alm. conv}, for all $p,q \in K$ there exists $q' \in B_{d}(q,\tvep)$ and a minimal $d$-geodesic $\ga^{d}_{pq'} \subset X\backslash T$ .  Like in the argument for the proof of Lemma \ref{alm. conv}, for $\eta$ small, $d_{\eta}(q,q')< 2\tvep$. Then by uniform smooth convergence on $X\backslash T$, there exists $\eta_0>0$ such that for $\eta<\eta_0$ and all $p,q \in K$, $$d_{\eta}(p,q) < \sL_{\eta}(\ga^{d}_{pq'}) + \deta(q,q') < \sL_{\omega}(\ga^{d}_{pq'}) +3 \tvep = d(p,q') + 3\tvep < d(p,q) + 4\tvep < d(p,q) + \vep$$\\
On the other hand, recall that $\ga^{d}_{pq'}$ is constructed as the limit of $\eta$-minimal geodesics $\ga^{\eta}_{pq_\eta}\subset X\backslash T$ with $q_\eta \in B_{\deta}(q,\tvep/2)\subset B_{d}(q,\tvep)$, and $q_\eta \rightarrow q^\prime$. So, $$d(p,q) < d(p,q^\prime)+\tvep < \mathcal{L}_{\eta}(\ga^{\eta}_{pq_\eta}) + 2\tvep = d_{\eta}(p,q_\eta) + 2\tvep < d_{\eta}(p,q) + 5\tvep/2 < d_{\eta}(p,q) + \vep$$ This finishes the proof of Claim-2. \\

 We can now complete the proof of the theorem. For small $\eta<\eta_0(\vep)$,
 \begin{eqnarray*}
&&d_{GH}((X,\deta),(X,d)) \\
&\leq& d_{GH}((X,\deta), (K,\deta)) + d_{GH}((K,\deta), (K,d)) + d_{GH}((K,d), (X,d)) \\
&<& 3\vep,
\end{eqnarray*}

\noindent where we use Claim 1 to bound the first term, Claim 2 to bound the second term, while the last term is bounded by $\vep$ from the choice of $K$.  Now, letting $\vep$ go to zero, we see that $(X,d_\eta)$ converges in Gromov-Hausdorff topology to $(X,d)$. 
 
\end{proof}

\section{\bf Estimate on volume density and proof of geodesic convexity}

Theorem \ref{conv} follows from a theorem of Colding-Naber \cite{CN}. In the previous section, we proved that $(X,d)$ is the Gromov-Hausdorff limit of smooth Riemannian metrics. The crucial point in proving geodesic convexity is that the regular set in the sense of Cheeger-Colding \cite{CC1} coincides with $X\setminus D$, and hence is open. To prove this, we need to show that the volume density of balls in $(X,d)$ centered on the divisor is strictly less than the Euclidean volume density. We do this by reducing to the case of a smooth divisor (i.e when $N =1$), and using known regularity results in this situation \cite{JMR,CDS2}. The volume density function of an $m$-dimensional Riemmanian manifold $(M^m,g)$ at a $p\in M$ is defined as $$V_g(p,r) := \frac{Vol_g(B_g(p,r))}{r^m}$$ We first observe the following elementary fact.

\begin{lemma}\label{model}
Let $\ob$ denote the model edge metric on $\CC^n$ with cone angle $2\pi\be$ along $[z_1=0]$ i.e $$ \ob = \frac{\sqrt{-1}}{2}\Big(\be^2|z_1|^{-2(1-\be)}dz_1\wedge d\bar z_1 + \sum_{j=2}^{n}{dz_j\wedge d\bar z_j}\Big) $$
Then for any $r>0$, $$ V_{\ob}(0,r) = \al(n)\be $$
where $\al(n) = \pi^n/n!$ is the volume of the unit Euclidean ball in $\CC^n$.
\end{lemma}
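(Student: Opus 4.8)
The plan is to compute the Hausdorff measure of metric balls in $(\CC^n, \ob)$ directly, exploiting the product structure and the explicit change of variables that straightens out the cone. First I would observe that $\ob$ is the product of the one-dimensional cone metric $g_1 = \be^2|z_1|^{-2(1-\be)}\frac{\sqrt{-1}}{2}dz_1\wedge d\bar z_1$ on the $z_1$-plane with the standard flat metric on $\CC^{n-1}$ in the remaining variables. The key computation is that $(\CC, g_1)$ is isometric to the standard metric cone $C(S^1_\be)$ over a circle of length $2\pi\be$: introducing the polar-type coordinate $\rho = |z_1|^{\be}$ (so that $d\rho = \be|z_1|^{\be-1}d|z_1|$ and the radial part of $g_1$ becomes $d\rho^2$), and keeping the angular coordinate $\theta = \arg z_1$ scaled to run over an interval of length $2\pi\be$, one checks $g_1 = d\rho^2 + \rho^2\,d(\be\theta)^2$. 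In particular the distance in $(\CC,g_1)$ from the origin to the point with $|z_1| = s$ is exactly $s^\be$, and the metric completion at $0$ is a smooth cone point.

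Next I would use this to identify the $\ob$-ball $B_{\ob}(0,r)$. Because the metric is a Riemannian product, a point $(z_1, z')$ lies in $B_{\ob}(0,r)$ precisely when $|z_1|^{2\be} + |z'|^2 < r^2$, where $|z'|$ is the ordinary Euclidean norm on $\CC^{n-1}$; this is because geodesics in a product split and the squared distance is the sum of the squared factor-distances. Then the $2n$-dimensional Hausdorff measure of this ball — which coincides with the Riemannian volume of $\ob$ away from $[z_1=0]$ and assigns zero to that codimension-two set — is computed by Fubini: $\mathcal{H}^{2n}(B_{\ob}(0,r)) = \int_{|z'| < r} \mathrm{Vol}_{g_1}\big(\{|z_1|^{2\be} < r^2 - |z'|^2\}\big)\,dV_{\mathrm{eucl}}(z')$. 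The inner integral is the area of a $g_1$-disc of radius $\sqrt{r^2-|z'|^2}$ about the cone point, which by the cone description equals $\be \cdot \pi \big(r^2 - |z'|^2\big)$ (the area of a Euclidean disc scaled by the angle factor $\be$). Carrying out the remaining integral over $|z'|<r$ in $\CC^{n-1}$ reproduces exactly $\be$ times the Euclidean formula, giving $\mathcal{H}^{2n}(B_{\ob}(0,r)) = \al(n)\,\be\, r^{2n}$, hence $V_{\ob}(0,r) = \al(n)\be$ for every $r$, independently of $r$ as expected by scaling invariance of the cone.

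I do not anticipate a serious obstacle here; the statement is genuinely elementary once the product-cone structure is made explicit. The only points requiring a little care are: (i) justifying that the metric completion at the origin behaves well enough that $\mathcal{H}^{2n}$ ignores the divisor $[z_1=0]$ and agrees with the Riemannian volume form $\ob^n/n!$ on the complement — this follows since $[z_1=0]$ has real codimension two and finite $\mathcal{H}^{2n-2}$ measure locally; and (ii) verifying that product distance really does split as $d((z_1,z'),(0,0))^2 = d_{g_1}(z_1,0)^2 + |z'|^2$, which is the standard fact that geodesics in a Riemannian product project to geodesics in each factor with the arclength adding in quadrature. With these two observations the Fubini computation is routine, and the scale-invariance of the answer is a built-in consistency check coming from the dilation symmetry $z_1 \mapsto t^{1/\be} z_1$, $z' \mapsto t z'$ of $\ob$.
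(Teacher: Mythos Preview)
Your proposal is correct and follows essentially the same approach as the paper: both identify $B_{\ob}(0,r)$ via the product/cone structure as $\{|z_1|^{2\be}+|z'|^2<r^2\}$ and then compute the volume by the substitution $\rho=|z_1|^{\be}$. The only cosmetic difference is that you organize the integral via Fubini (doing the $z_1$-integration first as a cone-disc area $\be\pi(r^2-|z'|^2)$), whereas the paper performs the change of variables directly in the full $n$-fold integral; the content is the same.
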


\begin{proof}
The $\ob$-minimal geodesic connecting the origin to any $(z_1,z') := (z_1,z_2,\cdots,z_n) \in \CC^n$ is given by $\ga(t) = (t^{1/\be}z_1,tz_2\cdots,tz_n)$, and it is easily seen that $\mathcal{L}_{\ob}(\ga) = |z_1|^{2\be} + |z'|^2$. So $B_{\ob}(0,r) = \{z\in \CC^n~|~ |z_1|^{2\be} + |z'|^2 < r\}$. But then, using polar coordinates $z_j = \rho_j e^{i\theta_j}$, and the change of variables $u = \rho^{2\be}$ in the third line, 
\begin{align*}
r^{2n}V_{\ob}(0,r) &= \int_{B_{\ob}(0,r)} {\frac{\ob^n}{n!}}\\
&= \be^2 (2\pi)^n\int_{\rho_1^{2\be} + \cdots + \rho_n^2 < r} {\frac{ (\rho_1d\rho_1)(\rho_2d\rho_2)\cdots (\rho_nd\rho_n)}{\rho_1^{2(1-\be)}}}\\
&= \be(2\pi)^n\int_{u^{2} + \cdots + \rho_n^2 < r} {(udu)(\rho_2d\rho_2)\cdots (\rho_nd\rho_n)}\\
&= \be\int_{0}^{2\pi} \cdots \int_{0}^{2\pi}\int_{u^{2} + \cdots + \rho_n^2 < r} {(udud\theta_1)(\rho_2d\rho_2d\theta_2)\cdots (\rho_nd\rho_nd\theta_n)}\\
&= \be \al(n)r^{2n}
\end{align*}
\end{proof}

\begin{lemma}\label{one divisor}
Suppose $D$ is a smooth divisor with defining section $s$, $h$ a smooth Hermitian metric on $[D]$, and $\tomega = \omega+\ddbar \tilde\vp$, a K\"ahler current solving $$\tomega^n = \frac{\tilde \Omega}{|s|_h^{2(1-\be)}}$$ for some smooth volume form $\tilde \Omega$ with $\be \in (0,1)$. Then for any $p\in D$, $$\lim_{r\rightarrow 0}V_{\tomega}(p,r) = \be\al(n)$$
\end{lemma}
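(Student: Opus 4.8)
The plan is to show that near a point $p \in D$, the metric $\tomega$ is uniformly close, after rescaling, to the model edge metric $\ob$ of Lemma \ref{model}, and then to conclude that the volume density converges to $V_{\ob}(0,r) = \be\al(n)$, which is independent of $r$. The key input is the regularity theory for conical K\"ahler-Einstein metrics along a \emph{smooth} divisor. By the results of Jeffres--Mazzeo--Rubinstein \cite{JMR} (or Chen--Donaldson--Sun \cite{CDS2}), the metric $\tomega$ solving $\tomega^n = \tilde\Omega/|s|_h^{2(1-\be)}$ with $\tilde\Omega$ smooth is polyhomogeneous near $D$; in particular, in suitable holomorphic coordinates $(w_1,\dots,w_n)$ centered at $p$ with $D = [w_1 = 0]$, one has $\tomega = \omega_\be^{(0)} + o(1)$ as $w \to 0$, where $\omega_\be^{(0)}$ is the flat model edge metric (up to a constant conformal factor absorbing $\tilde\Omega(p)$ and $|s|_h$). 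The precise statement we need is that $\tomega$ is uniformly equivalent to $\ob$ near $p$, and that the difference tends to zero in an appropriate sense as one approaches $p$.

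Concretely, I would argue as follows. First, fix coordinates as above and rescale: set $w_1 = \la^{1/\be} z_1$, $w_j = \la z_j$ for $j \geq 2$, which is the natural parabolic rescaling adapted to $\ob$ (cf. the geodesic $\ga(t) = (t^{1/\be}z_1, tz_2,\dots,tz_n)$ in Lemma \ref{model}). Under this rescaling the pulled-back metric $\la^{-2}\tomega$ converges, as $\la \to 0$, to a flat edge metric $c\,\ob$ on $\CC^n$ for some constant $c > 0$ coming from $\tilde\Omega(p)$ and the hermitian metric $h$ at $p$; the convergence is in $C^\infty_{loc}$ away from $\{z_1 = 0\}$ and, crucially, is accompanied by a \emph{two-sided} bound $C^{-1}\ob \leq \la^{-2}\tomega \leq C\ob$ uniform in $\la$, which follows from the local uniform equivalence of $\tomega$ to the standard edge metric. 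Second, I would invoke the volume convergence theorem of Cheeger--Colding \cite{CC1}: the rescaled pointed spaces $(\CC^n, \la^{-2}d_{\tomega}, p)$ converge in the pointed Gromov--Hausdorff sense to $(\CC^n, d_{c\ob}, 0)$ (the tangent cone at $p$), and since volumes of balls are continuous under non-collapsed Gromov--Hausdorff convergence with a Ricci lower bound (which holds on $X \setminus D$ for $\tomega$ and is inherited — e.g. via the smoothing $\oeta$ of Proposition \ref{smoothening} — in the limit), we get
\[
\lim_{r \to 0} V_{\tomega}(p,r) = \lim_{\la \to 0} \frac{Vol_{\tomega}(B_{\tomega}(p,\la))}{\la^{2n}} = \frac{Vol_{c\ob}(B_{c\ob}(0,1))}{1^{2n}} = V_{c\ob}(0,1) = \be\,\al(n),
\]
where the last equality is Lemma \ref{model} together with scale invariance of the model (the constant $c$ drops out since $V_{c\ob}(0,1) = V_{\ob}(0,1)$ by homogeneity). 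Alternatively, one can avoid tangent cones entirely: using the two-sided bound one shows $B_{C^{-1}\ob}(0,r) \subset B_{\tomega}(p,r) \subset B_{C\ob}(0,r)$ for small $r$, integrates $\tomega^n$ over this region, and uses that $\tomega^n = \tilde\Omega/|s|_h^{2(1-\be)} \to \tilde\Omega(p)/|s|_h(p)^{2(1-\be)} \cdot (\text{leading part})$ to extract the limit $\be\al(n)$ directly by the computation in Lemma \ref{model}.

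The main obstacle is establishing the precise asymptotic statement that $\tomega$, after the parabolic rescaling, actually converges to the \emph{flat} model $c\ob$ rather than merely staying uniformly equivalent to it — i.e. identifying the tangent cone at $p \in D$ as the metric cone over the spherical suspension corresponding to cone angle $2\pi\be$. For a smooth divisor this is exactly where the polyhomogeneous expansion of \cite{JMR} (or the linear theory of \cite{CDS2}) is indispensable: it gives that the subleading terms in $\tomega - c\ob$ decay at a positive rate under the rescaling. Everything else — Lemma \ref{model}, the volume convergence, the Ricci lower bound on the regular part — is either already available in the excerpt or standard. I would therefore structure the proof so that the only non-elementary citation is to the smooth-divisor regularity, keeping the rest self-contained.
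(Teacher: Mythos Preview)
Your proposal is correct and identifies the same key input as the paper: the regularity theory for conical metrics along a \emph{smooth} divisor from \cite{JMR,CDS2}, which says that $\tomega$ is asymptotic to the model $\ob$ near $p$. The paper's execution is more direct than your primary route via tangent cones and Cheeger--Colding volume convergence: it quotes the quantitative form $(1-\zeta)\ob < \tomega < (1+\zeta)\ob$ on $B_{\tomega}(p,r_\zeta)$ from Prop.~26 of \cite{CDS2} and reads off the sandwich
\[
\Big(\frac{1-\zeta}{1+\zeta}\Big)^n V_{\ob}\Big(0,\tfrac{r}{\sqrt{1+\zeta}}\Big) < V_{\tomega}(p,r) < \Big(\frac{1+\zeta}{1-\zeta}\Big)^n V_{\ob}\Big(0,\tfrac{r}{\sqrt{1-\zeta}}\Big),
\]
then lets $\zeta\to 0$ using Lemma~\ref{model}. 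This is essentially your ``alternative'' argument, with one correction: as written you use a \emph{fixed} constant $C$ in the ball inclusions $B_{C^{-1}\ob}(0,r)\subset B_{\tomega}(p,r)\subset B_{C\ob}(0,r)$, which only yields two-sided bounds on $\lim_r V_{\tomega}(p,r)$, not the exact value $\be\al(n)$; one needs $C=C(\zeta)\to 1$ as $r\to 0$, which is exactly what the cited regularity delivers. The paper also notes that the Liouville-type ingredient behind the \cite{CDS2} estimate---that the conical rescalings converge to a metric cone---can be justified here via the smoothing of Proposition~\ref{smoothening} combined with Cheeger--Colding, the same circle of ideas underlying your tangent-cone approach.
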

\begin{proof}
By Prop. 26 in \cite{CDS2}, for all $\zeta>0$, there exists an $r_\zeta>0$ such that in some holomorphic coordinates centered at $p\in D$, $$(1-\zeta)\ob < \tomega < (1+\zeta)\ob$$ on $B_{\tomega}(p,r_\zeta)$. In \cite{CDS2}, the metrics under consideration are conical K\"ahler-Einstein metrics.  A key technical point is a Liouville's theorem based on the observation that the conical re-scalings of $\tomega$ defined by $\tomega_\ep = \ep^{-2}T_{\ep}^{*}\tomega$, where $T_\ep(z_1,\cdots,z_n) = (\ep^{1/\be}z_1,\ep z_2,\cdots,\ep z_n)$, converge to a metric cone on $\CC^n$. In the present context, by Proposition \ref{smoothening} one can approximate $\tomega$ by smooth metrics with uniform Ricci lower bound. Then the convergence of the re-scalings to a metric cone is a consequence of general results of Cheeger-Colding \cite{CC1}. Now it is easy to see that $$ \Big(\frac{1-\zeta}{1+\zeta}\Big)^n V_{\be}(0,\frac{r}{\sqrt{1+\zeta}}) < V_{\tomega}(p,r) <  \Big(\frac{1+\zeta}{1-\zeta}\Big)^n V_{\be}(0,\frac{r}{\sqrt{1-\zeta}}); \hspace{0.2in} \forall r< r_\zeta$$ Lemma \ref{model} follows by letting $\zeta\rightarrow 0$ and $r\rightarrow 0$.
\end{proof}

\medskip

\begin{proposition}\label{vol est}
There exists an $\zeta > 0$ and $r(\zeta)>0$, such that for any $r<r(\zeta)$, and any $p\in D$, $$V_{d}(p,r):= \frac{Vol(B_{d}(p,r))}{r^{2n}} < (1-\zeta)\al(n)$$ where $\al(n) = \pi^n/n!$ is the volume of the unit Euclidean ball in $\CC^n$. 
\end{proposition}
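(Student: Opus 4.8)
The plan is to reduce the statement to the single--divisor computation of Lemma \ref{one divisor} by a localization argument, exploiting that $(X,d)$ is a non-collapsed Gromov--Hausdorff limit of smooth metrics with a uniform Ricci lower bound. By Proposition \ref{smoothening} we have $(X,\oeta)\xrightarrow{d_{GH}}(X,d)$ with $Ric(\oeta)>-A\oeta$, $diam(X,\oeta)<\Lambda$, and uniform non-collapsing; hence $(X,d)$ is a non-collapsed Ricci limit space and the Cheeger--Colding structure theory applies. In particular the Bishop--Gromov inequality passes to the limit: for every $x\in X$ the ratio $Vol(B_d(x,R))/v_{-A}(R)$ is non-increasing in $R$, where $v_{-A}(R)$ denotes the volume of an $R$-ball in the $2n$-dimensional space form of Ricci curvature $-A$. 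Consequently the density
\[
\vartheta(x):=\lim_{R\to0}\frac{Vol(B_d(x,R))}{\al(n)\,R^{2n}}=\lim_{R\to0}\frac{Vol(B_d(x,R))}{v_{-A}(R)}
\]
exists, satisfies $\vartheta(x)\le1$, and obeys $Vol(B_d(x,R))\le\vartheta(x)\,v_{-A}(R)$ for every $R>0$.

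Now fix $p\in D$ and choose a component $D_1$ with $p\in D_1$. Since $D$ is simple normal crossing and $p$ lies on only finitely many components, $D_1\cap\bigcup_{j\ne1}D_j$ is a proper analytic subset of $D_1$ near $p$, so its complement is dense in $D_1$ near $p$; thus for every sufficiently small $r>0$ we may pick $q_r\in D_1\setminus\bigcup_{j\ne1}D_j$ with $d(p,q_r)\le r^2$. In a ball around such a point the only divisorial component present is $D_1$, and there $\om$ is a (twisted) conical K\"ahler--Einstein metric along the smooth divisor $(1-\be_1)D_1$. Arguing exactly as in the proof of Lemma \ref{one divisor} --- using the local two-sided comparison of $\om$ with a model edge metric of cone angle $2\pi\be_1$ established in \cite{CDS2,JMR}, together with the smoothening of Proposition \ref{smoothening} --- one gets $\vartheta(q_r)=\be_1$. (Only the geometry in an arbitrarily small ball about the fixed point $q_r$ enters here, so it is harmless that such a ball shrinks as $q_r\to p$.)

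It remains to combine these facts. Since $B_d(p,r)\subseteq B_d(q_r,\,r+d(p,q_r))\subseteq B_d(q_r,\,r+r^2)$, Bishop--Gromov at $q_r$ gives
\[
Vol(B_d(p,r))\le Vol\big(B_d(q_r,\,r+r^2)\big)\le\vartheta(q_r)\,v_{-A}(r+r^2)=\be_1\,v_{-A}(r+r^2).
\]
As $r\to0$, $v_{-A}(r+r^2)=\al(n)(r+r^2)^{2n}\big(1+O(r^2)\big)=\al(n)\,r^{2n}\big(1+O(r)\big)$, so $V_d(p,r)\le\be_1\,\al(n)\big(1+O(r)\big)$. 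Setting $\be_{\max}=\max_j\be_j<1$, choose $\zeta=\tfrac12(1-\be_{\max})>0$ and $r(\zeta)>0$ small enough that $\be_{\max}\big(1+O(r)\big)<1-\zeta$ for all $r<r(\zeta)$; then $V_d(p,r)<(1-\zeta)\al(n)$ for every $p\in D$ and every $r<r(\zeta)$. The one delicate point is the passage from a smooth divisor to the crossing locus of $D$: rather than establishing the (still largely open) fine asymptotics of $\om$ at crossings, we use that every point of $D$ is a limit of smooth points of a single component --- where Lemma \ref{one divisor} applies and gives density exactly $\be_1$ --- and absorb the resulting discrepancy in the Bishop--Gromov comparison by taking $q_r$ at distance $o(r)$ from $p$.
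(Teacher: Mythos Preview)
Your argument is correct and reaches the same constant $\zeta=\tfrac12(1-\max_j\be_j)$, but the route differs from the paper's. The paper does not move to a nearby smooth point of a single component; instead it \emph{partially smooths} the divisor: for $p\in D_1$ it solves an auxiliary Monge--Amp\`ere equation
\[
\oep^n=\frac{e^{-\la\psi_\ep-f_\ep+c_\ep}\,\Omega}{|s_1|_{h_1}^{2(1-\be_1)}},\qquad f_\ep=\log\!\prod_{j\ge2}(|s_j|^2_{h_j}+\ep)^{1-\be_j},
\]
so that $\oep$ is a \emph{global} conical metric along the single smooth divisor $D_1$. Lemma \ref{one divisor} then applies verbatim to $\oep$ at $p$ itself, and Bishop--Gromov (via the approximations $\omega_{\ep,\eta}$) gives $V_{\oep}(p,r)\le\frac{1+\be_1}{2}\al(n)$ for $r<\bar r(A)$; a further $\ep\to0$ limit using Colding's volume convergence yields the bound for $V_d$. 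By contrast, you stay with $\om$ throughout, displace $p$ to $q_r\in D_1\setminus\bigcup_{j\neq1}D_j$ with $d(p,q_r)\le r^2$, invoke (the proof of) Lemma \ref{one divisor} locally at $q_r$ to get $\vartheta(q_r)=\be_1$, and then use the limit-space Bishop--Gromov inequality together with $B_d(p,r)\subset B_d(q_r,r+r^2)$.

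What each approach buys: the paper's smoothing lets Lemma \ref{one divisor} be used as a black box (the hypothesis ``$D$ smooth, $\tilde\Omega$ smooth'' is literally satisfied for $\oep$), at the cost of a second limiting parameter $\ep$ and a diagonalization to identify the GH limit. Your displacement argument is more geometric and avoids the auxiliary family $\oep$, but requires checking that the ingredients of Lemma \ref{one divisor} --- the tangent-cone/Liouville argument of \cite{CDS2} and the smoothening of Proposition \ref{smoothening} --- localize to a neighborhood of $q_r$; they do, since near $q_r$ the equation reads $\om^n=F\,|s_1|_{h_1}^{-2(1-\be_1)}$ with $F$ smooth, and Proposition \ref{smoothening} already applies to the full SNC metric $\om$. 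One small remark: your parenthetical that ``such a ball shrinks as $q_r\to p$'' is harmless precisely because Bishop--Gromov turns the \emph{infinitesimal} density $\vartheta(q_r)=\be_1$ into the \emph{global} bound $Vol(B_d(q_r,R))\le\be_1\,v_{-A}(R)$ for all $R$; it would be worth making this explicit.
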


\begin{proof} The proposition is proved by smoothening out all but one divisor, and using Lemma \ref{one divisor}. Without loss of generality, let $p\in D_1$, and consider the equation 

\begin{equation*}
\begin{cases}
\oep^n = \frac{e^{-\lambda \psi_\ep - f_\ep + c_\ep}\Omega}{|s_1|_{h_1}^{2(1-\be_1)}}\\
\oep = \homega + \ddbar \vpep > 0
\end{cases}
\end{equation*}

\noindent where $\psi_\ep$ is the sequence approximating $\vp$ from section 2, $f_\ep = \log\Big(\prod_{j=2}^{N}{(|s_j|_{h_j}^2+\ep)^{(1-\be_j)}}\Big)$ and $c_\ep$ is a constant such that the integrals match up. By Prop. \ref{smoothening} there exists a sequence $\oepet$ of smooth K\"ahler metrics and constants $A$ and $\Lambda$ such that 

\begin{align*}
&Ric(\oepet) > -A\oepet ~;~ diam(X,\oepet) < \Lambda\\
&\oepet \xrightarrow{C^{\infty}_{loc}(X\setminus D_1)} \oep \\
&(X,\oepet) \xrightarrow{d_{GH}} (X,\oep)
\end{align*}

By the Bishop-Gromov comparison theorem for the metrics $\oepet$ and Colding's volume convergence theorem \cite{Co}, for $r'<r$ $$\frac{V_{\oep}(p,r)}{V_{-A}(\tilde p,r)}\leq \frac{V_{\oep}(p,r')}{V_{-A}(\tilde p,r')}$$ where $V_{-A}(\tilde p ,r)$ is the volume ratio for the space form of constant sectional curvature $-A/(2n-1)$. Taking $r'\rightarrow 0$, by Lemma \ref{one divisor} 
\begin{align*}
V_{\oep}(p,r) &\leq \be_1V_{-A}(\tilde p,r)\\
&\leq \frac{1+\be_1}{2}\al(n)
\end{align*}
if $r< \bar r = \bar r(A)$. Moreover, since the Ricci lower bounds for $\oepet$ are uniform, by an elementary diagonalization argument, $\oep \xrightarrow{d_{GH}} \omega$ as $\ep \rightarrow 0$. Then once again by Coldings theorem on volume convergence, for $r<\bar r$, $$V_{d}(p,r)< \frac{1+\be_1}{2}\al(n)$$ This proves the proposition with $\zeta = \frac{1}{2}max((1-\be_1), \cdots,(1-\be_N))$ and $r(\zeta) = \bar r$.
\end{proof}
 
 \noindent Since $(X,d)$ is the Gromov-Hausdorff limit of $(X,\oeta)$, one can talk about the regular set, in the sense of Cheeger-Colding. It is defined as $$\sR = \{p\in X ~|~ (X, r_j^{-2}d, p) \xrightarrow{d_{GH}} (\mathbb{C}^n,d_{euc},0) \text{ for any sequence } r_j\rightarrow 0\}$$
 
\begin{lemma}
$\sR=X\setminus D$, and is in particular open and dense in $(X,d)$. 
\end{lemma}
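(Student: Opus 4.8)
The plan is to prove the two inclusions $\sR \subseteq X \setminus D$ and $X \setminus D \subseteq \sR$ separately, the first using Proposition \ref{vol est} and the second using smooth convergence away from $D$ together with Bishop-Gromov monotonicity.

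\textbf{Step 1: $X\setminus D \subseteq \sR$.} Let $p \in X \setminus D$. Since $\homega + \ddbar\vp = \om$ is smooth near $p$, for any $\zeta > 0$ there is a radius $\rho_\zeta$ and holomorphic coordinates centered at $p$ in which $(1-\zeta)\om_{euc} < \om < (1+\zeta)\om_{euc}$ on $B_\om(p,\rho_\zeta)$, where $\om_{euc}$ is the Euclidean metric. Comparing volume ratios exactly as in the proof of Lemma \ref{one divisor}, one gets $\left(\tfrac{1-\zeta}{1+\zeta}\right)^n \al(n) \leq \liminf_{r\to 0} V_d(p,r) \leq \limsup_{r\to 0} V_d(p,r) \leq \left(\tfrac{1+\zeta}{1-\zeta}\right)^n \al(n)$; letting $\zeta \to 0$ shows $\lim_{r\to 0} V_d(p,r) = \al(n)$, the Euclidean value. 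Since $(X,d)$ is a non-collapsed Gromov-Hausdorff limit of manifolds with a uniform Ricci lower bound (Corollary \ref{smt conv} plus the diameter bound give $Ric(\oeta) > -A\oeta$, and volume convergence gives non-collapsing at $p$), the volume-cone-implies-metric-cone rigidity and gap theorem of Cheeger-Colding \cite{CC1} force every tangent cone at $p$ to be $(\CC^n, d_{euc}, 0)$. Hence $p \in \sR$.

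\textbf{Step 2: $\sR \subseteq X \setminus D$.} Equivalently, $D \cap \sR = \emptyset$. Suppose $p \in D$. By Proposition \ref{vol est} there are $\zeta_0 > 0$ and $r(\zeta_0) > 0$ with $V_d(p,r) < (1-\zeta_0)\al(n)$ for all $r < r(\zeta_0)$. By Bishop-Gromov monotonicity for the approximating metrics $\oeta$ (with Ricci lower bound $-A\oeta$), passed to the limit via Colding's volume convergence theorem \cite{Co}, the function $r \mapsto V_d(p,r)/V_{-A}(\tilde p, r)$ is non-increasing, and since $V_{-A}(\tilde p, r) \to \al(n)$ as $r \to 0$, the limit $\Theta(p) := \lim_{r\to 0} V_d(p,r)$ exists and satisfies $\Theta(p) \leq (1-\zeta_0)\al(n) < \al(n)$. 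If $p$ were in $\sR$, then $(X, r_j^{-2}d, p) \to (\CC^n, d_{euc}, 0)$ for some $r_j \to 0$, and volume convergence (again using the uniform Ricci lower bound and non-collapsing) would give $\Theta(p) = \al(n)$, a contradiction. Hence $p \notin \sR$.

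\textbf{Conclusion and openness/density.} Combining the two steps, $\sR = X \setminus D$. This set is open in $(X,d)$ because $D$ is closed, and dense because $D$ has real codimension two (indeed, the volume computation above, or the fact that $X_0$ is of full measure shown in the proof of the diameter bound, shows $X\setminus D$ has full $\mathcal{H}^{2n}$-measure). The main obstacle is Step 1, specifically justifying that the Euclidean volume-density limit at a smooth point upgrades to the stronger conclusion that \emph{every} tangent cone is Euclidean — this is where the Cheeger-Colding volume-cone rigidity is essential, and one must be careful that $(X,d)$ genuinely satisfies the hypotheses (non-collapsed limit of a uniform Ricci-lower-bound sequence), which is exactly what Proposition \ref{smoothening} and Corollary \ref{smt conv} were set up to provide.
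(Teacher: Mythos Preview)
Your proof is correct and follows the paper's structure: both prove the two inclusions separately, with $\sR\subseteq X\setminus D$ handled via Proposition~\ref{vol est} and Colding's volume convergence, exactly as the paper does. The only difference is in Step~1, where you work harder than necessary. The paper simply observes that, by smooth convergence of $\oeta\to\om$ on $X\setminus D$ (Corollary~\ref{smt conv}), any $p\in X\setminus D$ has a neighborhood on which $d$ coincides with the distance of a smooth Riemannian metric; hence every rescaling $(X,r_j^{-2}d,p)$ converges to $(\CC^n,d_{euc},0)$ by elementary normal-coordinate scaling, and $p\in\sR$ immediately. Computing the volume density and then invoking the Cheeger--Colding volume-cone-implies-metric-cone rigidity is valid but circuitous---you are using a deep structure theorem to recover a fact that is trivially true at a smooth point. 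The payoff of the paper's direct argument is that it makes clear the only nontrivial content of the lemma is the inclusion $\sR\subseteq X\setminus D$, which is precisely why Proposition~\ref{vol est} was proved.
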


\begin{proof} By smooth convergence away from $D$, it is clear that $X\setminus D \subset \mathcal{R}$. On the other hand, suppose $p\in \mathcal{R}$, then by Colding's volume convergence, the volume density $V_{d}(p,r)$ can be made arbitrarily close to the $\al(n)$ for a small enough $r>0$. But then $p$ cannot belong to $D$ since this would contradict with Proposition \ref{vol est}. Hence $\mathcal{R}=  X\setminus D$, and is consequently open.  The denseness follows from the fact that $X\setminus D$ has full measure.
\end{proof}

\begin{proof}[\textbf{Proof of Theorem \ref{conv}}] We follow the line of argument in \cite{CN}.  By Colding and Naber's result on the H\"older continuity of the tangent cones of limiting spaces of sequences with a Ricci lower bound \cite[Cor. 1.5]{CN}, the set of regular points in the interior of a limiting geodesic is closed. On the other hand, by the above lemma, this set is also open. Therefore, as soon as one interior point lies in $X\setminus D$, all must, and the theorem is proved.  
\end{proof}

\begin{proof}[\textbf{Proof of Corollary \ref{smooth geod}}]
For every $\eta>0$, there exists a unit speed $\eta$-minimal geodesic $\ga^{\eta}:[0,l_\eta]\rightarrow X$ connecting $p$ and $q$ with $l_\eta \rightarrow l$. By the Ascoli-Arzela theorem for Gromov-Haudorff limits, there exists a continuous limiting geodesic $\ga:[0,l]\rightarrow X$ connecting $p$ and $q$. By Theorem \ref{conv}, $\ga$ stays away from $D$. For any $\ga(t_0)$ with, there exists a small ball $B_{d}(\ga(t_0),\ep) \subset X\setminus D$. By the argument of Claim-1 in the proof of Lemma \ref{alm. conv} $B_{d_\eta}(\ga(t_0),\ep/2) \subset B_{d}(\ga(t_0),\ep)$ for $\eta$ small enough. By convergence of geodesics and the fact that the geodesics are of unit speed, there exists a $\delta$ such that, for $\eta$ small enough $\ga^\eta(t) \in B_{d}(\ga(t_0),\ep)$ for all $|t-t_0|< \delta$. By smooth convergence of the metrics on $X\setminus D$, it is easily seen that $\ga|_{(t_0-\delta,t_0+\delta)}$ must be smooth, and hence all of $\ga$ must be smooth.
\end{proof}

\section{\bf Comparison theorems for conical metrics along simple normal crossing divisors} 

For this section we fix $D$ to be an effective simple normal crossing divisor given by \eqref{divisor}, and $\omega$ to be a conical K\"ahler metric along $D$ inducing the metric $d$ on $X$. The aim of this section is to present extensions of some classical comparison theorems to this singular setting. The crucial point is that the cut locus has measure zero. This is already proved in \cite{H,CN}. For the convenience of the reader, we offer an elementary proof in the conical case by exploiting smooth convergence away from the divisor. 
\begin{definition}
We say that $$Ric(\omega)>-A\omega$$ if there exists a smooth positive closed (1,1) form $\chi$ such that $$Ric(\omega) + A\omega = \chi + [D]$$ 
\end{definition}

For a point $p\in X\setminus D$, let $$\sE_p = \{v \in T_pX ~|~ \exists \text{ geodesic } \ga:[0,1]\rightarrow X\setminus D \text{ with } \ga(0)=p~,~\ga^{\prime}(0) = v \}$$The exponential map is well defined and smooth on $\sE$.  The following lemma follows directly from Theorem \ref{conv}.

\begin{lemma}
$exp_p : \sE_p \rightarrow X\setminus D$ is surjective. 
\end{lemma}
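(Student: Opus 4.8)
The plan is to read this off directly from Corollary \ref{smooth geod}, which itself rests on Theorem \ref{conv}. Fix an arbitrary $q\in X\setminus D$ and put $l = d(p,q)$. If $p=q$ there is nothing to prove (take $v = 0$), so assume $l>0$; the goal is to exhibit a vector $v\in\sE_p$ with $\exp_p(v)=q$.

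First I would apply Corollary \ref{smooth geod} to obtain a smooth unit-speed geodesic $\ga\colon[0,l]\to X\setminus D$ with $\ga(0)=p$, $\ga(l)=q$, whose image lies entirely in $X\setminus D$. Reparametrizing linearly, set $\sigma(t) := \ga(lt)$ for $t\in[0,1]$. Then $\sigma$ is again a geodesic (now of constant speed $l$, since constant reparametrization preserves $\nabla_{\dot\sigma}\dot\sigma = 0$), its image still lies in $X\setminus D$, and $\sigma(0)=p$, $\sigma(1)=q$, $\sigma'(0)=l\,\ga'(0)$. Hence $v := l\,\ga'(0)\in\sE_p$ by the very definition of $\sE_p$, and because $\exp_p$ is defined by following the (unique) $X\setminus D$-geodesic with the prescribed initial data, we get $\exp_p(v)=\sigma(1)=q$. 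Since $q$ was an arbitrary point of $X\setminus D$, this proves surjectivity.

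There is essentially no obstacle here: all of the analytic content has already been packaged into Theorem \ref{conv} and Corollary \ref{smooth geod}, and what remains is the trivial bookkeeping that a minimal geodesic defined on $[0,l]$ reparametrizes to a (non-unit-speed) geodesic on $[0,1]$ whose initial velocity then lies in $\sE_p$ by construction. The only point worth a word is that the geodesic in Corollary \ref{smooth geod} stays in $X\setminus D$ — but that is precisely its conclusion.
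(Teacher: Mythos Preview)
Your argument is correct and matches the paper's approach: the paper simply states that the lemma ``follows directly from Theorem \ref{conv}'' without giving further details, and what you have written is precisely the unpacking of that claim via Corollary \ref{smooth geod}. The reparametrization step and the observation that $v=l\,\ga'(0)\in\sE_p$ are exactly the routine bookkeeping the paper leaves implicit.
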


We define the cut locus and conjugate locus in the usual way.

\begin{definition}
\begin{enumerate}
\item For a $p\in X\setminus D$, the cut locus is defined by  $$\sC_p = \{x\in X ~|~ \forall z\in X\backslash \{x\},~ d(p,x) + d(x,z) > d(p,z)\}$$ 
\item The conjugate locus is defined by $$Conj( p) = \{x\in X\setminus D ~|~ \exists ~v\in exp_p^{-1}(x) \text{ such that } dexp_p \text{ is degenerate at } v \}$$ Furthermore, $x=\ga(t_0)$ is said to be conjugate to $p$ along a unit speed geodesic $\ga:[0,l]\rightarrow X\setminus D$ if $exp_p$ is singular at $v = t_0\ga'(0)$.  
\end{enumerate}
\end{definition}

The following useful characterization of the cut locus from standard Riemannian geometry \cite{CE} also extends to this setting. 

\begin{lemma}\label{cut locus char}
Let $\ga:[0,l]\rightarrow X\setminus D$ be a smooth unit-speed geodesic emanating from $p$. Then $x=\ga(t_0) \in \sC_p$ if and only if one of the following holds at $t=t_0$ and neither holds for any smaller value of $t$ : 
\begin{enumerate}
\item \label{1}$x$ is conjugate to $p$ along $\ga$.
\item \label{2}There exists a unit speed minimal limiting geodesic $\sigma \neq \ga$ connecting $p$ and $x$.
\end{enumerate}
\end{lemma}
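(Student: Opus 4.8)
The plan is to reduce to the classical Riemannian situation on $X\setminus D$ and then use the convexity Theorem \ref{conv} together with smooth convergence to upgrade from ``minimal geodesic'' to ``minimal \emph{limiting} geodesic''. Recall that by the previous lemmas the exponential map $\exp_p$ is smooth on $\sE_p$ and surjective onto $X\setminus D$, so all the infinitesimal apparatus of Jacobi fields is available along any geodesic $\ga:[0,l]\to X\setminus D$. First I would establish the standard preliminary facts in this setting: (a) if $\ga|_{[0,t_1]}$ is minimal among paths in $X\setminus D$ then $\ga|_{[0,t]}$ is minimal for every $t<t_1$, and moreover strictly minimal (no competitor of equal length) for $t<t_1$ unless a second minimizer appears, and (b) past a conjugate point a geodesic cannot be minimal, which is the usual second-variation/index-form argument carried out verbatim on $X\setminus D$ since the metric is smooth there. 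The one place the singular set enters is that a priori a minimizing path could leave $X\setminus D$; but Theorem \ref{conv} guarantees that a limiting minimal geodesic with an interior point in $X\setminus D$ stays in $X\setminus D$, so for points $x=\ga(t_0)$ with $\ga$ an interior arc of $X\setminus D$, $d$-minimality in $X$ is the same as minimality within $X\setminus D$.

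Next I would prove the two implications. For the ``if'' direction: suppose neither \eqref{1} nor \eqref{2} holds for any $t<t_0$. Then for every $t<t_0$ there is no conjugate point, so $\exp_p$ is a local diffeomorphism near $t\ga'(0)$, and there is no second minimal limiting geodesic to $\ga(t)$; a standard argument (first-variation plus the fact that a distinct minimizer to a nearby point would, by Ascoli--Arzela and Theorem \ref{conv}, limit to a distinct minimal limiting geodesic to $\ga(t_0)$) shows $\ga|_{[0,t]}$ is the unique minimizer, hence $\ga(t)\notin\sC_p$; so $t_0=\inf\{t:\ga(t)\in\sC_p\}$ or $\ga$ is minimal up to and slightly past $t_0$, and in the latter case one checks $\ga(t_0)\notin\sC_p$, giving the characterization. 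For the ``only if'' direction: if $x=\ga(t_0)\in\sC_p$ is the first cut point, then $\ga|_{[0,t_0]}$ is minimal but for any $t>t_0$ (close to $t_0$) $\ga|_{[0,t]}$ is not. Take a sequence $t_j\downarrow t_0$ with competing minimizers; using the uniform $\eta$-approximation from Proposition \ref{smoothening}, the argument of Claim-1 in Lemma \ref{alm. conv}, and Ascoli--Arzela for Gromov--Hausdorff limits, extract a unit-speed minimal \emph{limiting} geodesic $\sigma$ from $p$ to $x$; by Theorem \ref{conv} $\sigma$ lies in $X\setminus D$ (its interior point $\ga(t_0/2)$ region is regular, or one runs the extraction so an interior point is regular). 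Either $\sigma\ne\ga$, giving \eqref{2}, or $\sigma=\ga$, in which case $\ga|_{[0,t_j]}$ fails to be minimal while converging to the minimal $\ga|_{[0,t_0]}$, which forces $\exp_p$ to be singular at $t_0\ga'(0)$ by the classical index-form argument, giving \eqref{1}.

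The main obstacle, and the only genuinely non-classical point, is controlling minimizing competitors near the cut point: one must rule out that the relevant minimizing paths (or their limits) pass through $D$, and must know that a limit of minimizers is again a \emph{limiting} geodesic in the sense defined in the introduction rather than merely a rectifiable minimizing curve. Both issues are resolved by the convexity Theorem \ref{conv} combined with the smooth approximation $\oeta\to\om$: any $d$-minimizer between points of $X\setminus D$ that has a regular interior point is realized as a limit of $\oeta$-geodesics and stays in $X\setminus D$, so the whole discussion takes place in the smooth manifold $(X\setminus D,\om)$ where the classical proof of Lemma \ref{cut locus char} (as in \cite{CE}) applies word for word. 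The remaining steps --- second variation, Jacobi fields, the index form --- are entirely routine and I would not reproduce them in detail.
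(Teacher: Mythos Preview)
Your approach is essentially the paper's: take points $x_j=\ga(t_0+\ep_j)$ just past the cut point, invoke Corollary \ref{smooth geod} (i.e.\ Theorem \ref{conv} plus smooth convergence) to produce smooth minimal limiting geodesics $\sigma_j\subset X\setminus D$ from $p$ to $x_j$, extract a limit $\sigma$, and split into $\sigma\ne\ga$ (criterion \eqref{2}) versus $\sigma=\ga$ (criterion \eqref{1}); the paper only writes out this ``only if'' direction and leaves the converse to \cite{CE}, while you spell out both. One small correction: in the case $\sigma=\ga$ the conclusion that $\exp_p$ is singular at $t_0\ga'(0)$ comes from the \emph{inverse function theorem} --- the distinct vectors $l_j\sigma_j'(0)$ and $(t_0+\ep_j)\ga'(0)$ both map to $x_j$ and both converge to $t_0\ga'(0)$ --- not from the index form, which runs the implication the other way (conjugate point $\Rightarrow$ non-minimal).
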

\begin{proof}
Suppose $\ga(t_0) \in \sC_p$, $\ep_j\rightarrow 0$ and $\sigma_j:[0,l_j] \rightarrow X\setminus D$ be a unit speed smooth limiting minimal geodesic connecting $p$ to $x_j=\ga(t_0 + \ep_j)$.  By continuity of the distance function, $l_j\rightarrow t_0$. By the same argument as in the proof of Cor. \ref{smooth geod}, one can show that there exists a $\sigma:[0,t_0] \rightarrow X\setminus D$ connecting $p$ and $x$ such that $\sigma_j \rightarrow \sigma$ smoothly.  If $\sigma\neq\ga$, criteria \eqref{2} is satisfied. If not, then arbitrarily small neighborhoods of $t_0\ga^{\prime}(0)$ in $\sE_p$ have two distinct vectors, namely $l_{j}\sigma_{j}^{\prime}(0)$ and $(t_0+\ep_j)\ga^{\prime}(0)$, mapped to the same point $x_j$ under the exponential map. By the inverse function theorem, $t_0\ga^{\prime}(0)$ is a singular point of $\exp_p$ or equivalently $x$ is a conjugate point along $\ga$.
\end{proof}

As an immediate corollary we have 

\begin{corollary}
$\sC_p$ has measure zero with respect to $\omega$.
\end{corollary}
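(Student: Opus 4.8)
The plan is to deduce that $\sC_p$ is null with respect to $\omega$ directly from the characterization in Lemma \ref{cut locus char}, by showing that each of the two ways a point can enter the cut locus contributes only a measure-zero set, and then invoking the fact that the exponential map is a smooth measure-preserving-in-the-relevant-sense map off a bad set. First I would set $U_p \subset \sE_p$ to be the star-shaped open set of vectors $v$ such that the geodesic $t\mapsto \exp_p(tv)$, $t\in[0,1]$, stays in $X\setminus D$ and is minimizing up to and including $t=1$; by the previous lemma $\exp_p(U_p) \supset X\setminus D \setminus \sC_p$, and by Theorem \ref{conv} together with the surjectivity lemma every point of $X\setminus D$ is reached by some minimal geodesic from $p$, so $\sC_p \cap (X\setminus D) \subseteq \exp_p(\partial U_p)$ modulo the set where two minimal geodesics exist. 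The boundary $\partial U_p$ is, by Lemma \ref{cut locus char}, the union of the set of ``first conjugate vectors'' (where $d\exp_p$ degenerates) and the set where criterion \eqref{2} first applies.

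Next I would handle the conjugate contribution: let $\Sigma = \{ v \in \sE_p ~|~ d\exp_p \text{ is singular at } v\}$. Since $\exp_p$ is smooth on the open set $\sE_p$, $\Sigma$ is the zero set of the smooth function $v \mapsto \det d\exp_p(v)$ (in coordinates, using $\omega$-orthonormal frames), and by Sard's theorem — or more simply by the fact that on $\Sigma$ the differential drops rank, so $\exp_p(\Sigma)$ has zero $2n$-dimensional Hausdorff measure — we get $Vol_\omega(\exp_p(\Sigma)) = 0$, hence $Vol_\omega(\mathrm{Conj}(p)) = 0$. For the ``second minimal geodesic'' contribution, I would argue as in standard Riemannian geometry: if $x = \ga(t_0) \in \sC_p$ with a second minimal limiting geodesic $\sigma \neq \ga$ and $x$ is not conjugate to $p$ along $\ga$, then near $t_0\ga'(0)$ the map $\exp_p$ is a local diffeomorphism, so a whole neighborhood of $x$ is covered twice; thus this part of $\sC_p$ lies in the image under $\exp_p$ of $\partial U_p \setminus \Sigma$, which is a Lipschitz hypersurface in $\sE_p$ (the set where the function $v \mapsto |v|$ equals the distance $d(p,\exp_p(v/|v|))$ first fails to be minimizing is the graph of the cut-distance function, known to be Lipschitz off the conjugate set), and hence has $2n$-dimensional measure zero.

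Finally, I would combine these: $\sC_p \cap (X\setminus D) \subseteq \exp_p(\Sigma) \cup \exp_p(\text{Lipschitz hypersurface})$, both of which are $\omega$-null, and $\sC_p \cap D$ has measure zero trivially because $D$ itself is $\omega$-null (it is a complex submanifold of $X\setminus D$'s completion of real codimension two, and $\omega$ is locally equivalent to an edge metric so $Vol_\omega(D) = 0$). Therefore $Vol_\omega(\sC_p) = 0$, as claimed.

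The main obstacle I anticipate is making the ``Lipschitz hypersurface'' step fully rigorous in this singular setting: in the smooth Riemannian case one relies on the cut-distance function $v \mapsto \tau(v)$ being continuous (indeed Lipschitz away from conjugate points) on the unit sphere in $T_pX$, and one must check that the argument of Lemma \ref{cut locus char} — which already produced a smooth second geodesic $\sigma$ as a limit of nearby minimizers — really does give continuity of this cut-distance function up to the point where it meets the conjugate locus, using only smooth convergence of metrics on $X\setminus D$ and the convexity theorem. An alternative, perhaps cleaner, route that sidesteps this is to note that Lemma \ref{cut locus char} shows $\sC_p \cap (X\setminus D) = \mathrm{Conj}(p) \cup \{x : \exists\, \sigma \neq \ga \text{ minimal from } p\}$, and that the latter set, being contained in the set of points reached by at least two distinct minimal geodesics, has measure zero by the same $\exp_p$-is-a-local-diffeomorphism-hence-double-cover argument applied pointwise together with a Vitali-type covering; either way the measure-theoretic bookkeeping for the non-conjugate part is where care is needed, while the conjugate part is immediate from smoothness of $\exp_p$ on $\sE_p$ and the rank-drop estimate.
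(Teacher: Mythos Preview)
Your decomposition and the treatment of the conjugate part are essentially the same as the paper's (Sard's theorem applied to the smooth map $\exp_p:\sE_p\to X\setminus D$), and your overall argument is correct. The difference lies in the non-conjugate contribution: you work on the \emph{domain} side, trying to show that the cut-time function is Lipschitz off the conjugate set so that its graph is a Lipschitz hypersurface in $\sE_p$, and you correctly flag this as the delicate step in the singular setting. The paper bypasses this entirely by working on the \emph{target} side: if $x\in\sC_p\cap(X\setminus D)$ is reached by two distinct minimal geodesics $\ga\neq\sigma$, then the distance function $r(\cdot)=d(p,\cdot)$ fails to be differentiable at $x$ (the one-sided directional derivatives along $\ga'(t_0)$ and $\sigma'(t_0)$ disagree). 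Since $r$ is $1$-Lipschitz on $X\setminus D$ with respect to the smooth metric $\omega$, Rademacher's theorem gives that its non-differentiability set has $\omega$-measure zero. This one-line observation replaces your hypersurface/Vitali argument and avoids any regularity analysis of the cut-time function, which is exactly the obstacle you anticipated.
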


\begin{proof}
From the previous Lemma $\sC_p \subset \{\text{singular values of } exp_p\}\cup \{r \text{ is not differentiable}\}$.   The first one has measure zero by Sard's theorem, while the second one has measure zero because $r$ is Lipshitz.
\end{proof}

We now present some classical comparison theorems. We also recall the proofs to emphasize that geodesic convexity, even of the slightly weaker kind proved in the present article, is all that is needed for the extensions to the conical setting.

\begin{theorem}[Laplacian comparison] Suppose $Ric(\omega) >(2n-1) \la\omega$ for some $\la \in \mathbb{R}$, and $\tilde X$ is the $2n$-dimensional space form with constant sectional curvature $\la$. Let $r(x)$ and $\tilde r(\tilde x)$ be distance functions to some fixed points in $X$ and $\tilde X$ respectively. Then for any $x\in X\setminus D$ where $r$ is smooth, and any $\tilde x \in \tilde X$ where $\tilde r$ is smooth with $r(x) = \tilde r(\tilde x)$, $$\Delta r(x) \leq \tilde\Delta\tilde r(\tilde x)$$
\end{theorem}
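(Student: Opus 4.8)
The plan is to reduce the Laplacian comparison to the classical Riemannian statement by working entirely on $X\setminus D$, where $\omega$ is a genuine smooth K\"ahler metric, and then to invoke the geodesic convexity established in Theorem~\ref{conv} to guarantee that the minimizing geodesic realizing $r(x)$ stays in the regular locus. First I would fix a point $p\in X\setminus D$ serving as the basepoint of $r$ (approximating a general basepoint if necessary, though for the statement it suffices to take $p\notin D$), and fix $x\in X\setminus D$ where $r$ is smooth. Smoothness of $r$ at $x$ forces $x\notin\sC_p$, so by Lemma~\ref{cut locus char} there is a \emph{unique} minimal limiting geodesic $\ga$ from $p$ to $x$, it is smooth, and $x$ is not conjugate to $p$ along it. By Theorem~\ref{conv} (or rather Corollary~\ref{smooth geod}), $\ga$ lies entirely in $X\setminus D$; hence a whole neighborhood of $\ga$ in $X$ is contained in the open set $X\setminus D$, and on that neighborhood $\omega$ is a smooth Riemannian metric with $\ric(\omega)\geq (2n-1)\la\omega$ pointwise in the classical sense.

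Next I would run the standard second-variation/Riccati argument for $\Delta r$ along $\ga$, which is purely local and lives in the smooth tube around $\ga$. Concretely: let $J$ be the Jacobi fields along $\ga$ vanishing at $p$; the Hessian of $r$ at $x$ restricted to $\ga'(x)^\perp$ is the shape operator $S$ of the geodesic sphere, $S$ satisfies the matrix Riccati equation $S' + S^2 + R_{\ga'} = 0$ with the asymptotic condition $S\sim \tfrac1t\,\mathrm{Id}$ near $p$, and $\Delta r(x) = \tr S + (\text{the }\ga'\ga'\text{-term, which vanishes})$. Comparing with the model operator $\tilde S$ on the space form $\tX$ via the Riccati comparison principle (using $R_{\ga'}\geq (2n-1)\la$ in the sense of the trace, together with the fact that $r$ is smooth so no focal point is encountered on $[0,r(x)]$) yields $\tr S \leq \tr\tilde S = \tilde\Delta\tilde r(\tilde x)$ whenever $r(x)=\tilde r(\tilde x)$. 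All of this is verbatim the classical computation in \cite{CE}; the only non-classical input is that we may legitimately restrict to a smooth Riemannian neighborhood, which is exactly what geodesic convexity buys us.

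The main obstacle—and the reason the theorem is nontrivial in the conical setting—is precisely the possibility that a priori a minimizing geodesic from $p$ to $x$ could dip into $D$, where $\omega$ degenerates and the Jacobi field / Riccati machinery makes no sense. This is ruled out by Theorem~\ref{conv}: a limiting minimal geodesic whose interior meets $X\setminus D$ (and for $x$ outside the cut locus the minimizer is a limiting geodesic, with $x$ itself in the interior of a slightly longer such geodesic) has all interior points in $X\setminus D$. A secondary technical point is to ensure that the minimizer through $x$ is genuinely a \emph{limiting} geodesic so that Theorem~\ref{conv} applies; this follows from Corollary~\ref{smooth geod}, which produces a smooth minimal geodesic $\ga:[0,l]\to X\setminus D$ from $p$ to any $x\in X\setminus D$, and from the uniqueness at non-cut-locus points given by Lemma~\ref{cut locus char}. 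Once the geodesic is confined to $X\setminus D$ and $x$ is not conjugate, there is genuinely nothing left to do beyond citing the classical comparison; I would write the proof accordingly, spending most of the words on the reduction and merely sketching the Riccati comparison.
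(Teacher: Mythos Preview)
Your proposal is correct and follows essentially the same strategy as the paper: use geodesic convexity (Theorem~\ref{conv}/Corollary~\ref{smooth geod}) to confine the minimal geodesic from $p$ to $x$ inside $X\setminus D$, and then run the classical comparison argument along that smooth geodesic. The only cosmetic difference is that the paper uses the Bochner formula to derive a scalar Riccati inequality for $\Delta r$ and compares via an integrating-factor ODE argument, whereas you phrase it via the matrix Riccati equation for the shape operator; these are equivalent presentations of the same classical computation.
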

\begin{proof}
By Bochner formula,
\begin{align*}
0 &= |\nabla^2 r|^2 + \frac{\partial(\Delta r)}{\partial r} + Ric(\nabla r,\nabla r)\\
&\geq (\Delta r)^2 +  \frac{\partial(\Delta r)}{\partial r} + (2n-1)\la
\end{align*}
Note that equality holds in the case of $\tilde X$. So, if $\ga\subset X\setminus D$ and $\tilde \ga$ are unit speed minimal geodesics joining the reference points to $x$ and $\tilde x$ respectively, then $u(t) = \Delta r(\ga(t)) - \tilde \Delta \tilde r(\tilde \ga(t))$ satisfies the differential inequality $$\dot u + gu \leq 0$$ where $g = \Delta r(\ga(t)) + \tilde \Delta \tilde r(\tilde \ga(t))$. Moreover $$\lim_{t\rightarrow 0}|\Delta r(\ga(t))-\Big( \frac{2n-1}{t}\Big)| = \lim_{t\rightarrow 0} |\tilde \Delta \tilde r(\tilde \ga(t))-\Big( \frac{2n-1}{t}\Big)| = 0$$ i.e $u(0) = 0$. By the method of integrating factors for first order ODEs, it is easily seen that $u(t) \leq 0  ~\forall ~ t$. 

\end{proof}

\begin{theorem}[Myer's theorem]\label{Myers}
With $D$ as above, suppose $\omega$ is a conical K\"ahler metric along $D$ satisfying $Ric(\omega)>(2n-1)\la\omega$ for some $\la>0$. Then $$diam(X,d) < \frac{\pi}{\sqrt{\la}}$$ 
\end{theorem}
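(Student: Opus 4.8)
The plan is to mimic the classical proof of Myers' theorem via the second variation of arc length, taking care that all geodesics involved stay in $X\setminus D$ so that the smooth Riemannian computations apply. First I would fix two points $p,q\in X\setminus D$ and, by Corollary \ref{smooth geod}, choose a smooth unit-speed minimal geodesic $\ga:[0,l]\to X\setminus D$ with $\ga(0)=p$, $\ga(l)=q$ and $l=d(p,q)$. The goal is to show $l<\pi/\sqrt{\la}$; since $X\setminus D$ is dense in $(X,d)$ and $d$ is continuous, this bounds $\mathrm{diam}(X,d)$.

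Next, suppose for contradiction that $l\geq \pi/\sqrt\la$. Since $\ga$ is smooth and contained in the open set $X\setminus D$ where $\om$ is a genuine smooth K\"ahler (hence Riemannian) metric, I can pick a parallel orthonormal frame $\{e_1,\dots,e_{2n}\}$ along $\ga$ with $e_{2n}=\ga'$, and form the standard test variation fields $V_i(t) = \sin(\pi t/l)\,e_i(t)$ for $i=1,\dots,2n-1$. Each $V_i$ vanishes at the endpoints and the variations they generate can be taken to have all nearby curves lying in $X\setminus D$, because $X\setminus D$ is open and $\ga([0,l])$ is a compact subset of it. The second variation formula gives, for each $i$,
\begin{equation*}
I(V_i,V_i) = \int_0^l \Big( |V_i'|^2 - \langle R(V_i,\ga')\ga', V_i\rangle\Big)\,dt = \int_0^l \sin^2\!\Big(\tfrac{\pi t}{l}\Big)\Big(\tfrac{\pi^2}{l^2} - \langle R(e_i,\ga')\ga',e_i\rangle\Big)\,dt .
\end{equation*}
Summing over $i=1,\dots,2n-1$ and using $\sum_i \langle R(e_i,\ga')\ga',e_i\rangle = \mathrm{Ric}(\ga',\ga') > (2n-1)\la$ (valid pointwise along $\ga$ since $\ga\subset X\setminus D$, where the curvature-form inequality $\mathrm{Ric}(\om)>(2n-1)\la\om$ is an honest pointwise inequality), I get
\begin{equation*}
\sum_{i=1}^{2n-1} I(V_i,V_i) < \int_0^l \sin^2\!\Big(\tfrac{\pi t}{l}\Big)(2n-1)\Big(\tfrac{\pi^2}{l^2} - \la\Big)\,dt \leq 0,
\end{equation*}
using $l\geq\pi/\sqrt\la$. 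Hence some $I(V_i,V_i)<0$, contradicting the minimality of $\ga$ (a minimal geodesic has nonnegative index form on all proper variations through admissible curves). This forces $l<\pi/\sqrt\la$.

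**The main obstacle** is purely a matter of legitimacy rather than computation: one must be sure that the variations realizing the test fields $V_i$ can be chosen to stay inside $X\setminus D$, and that "minimality of $\ga$" is being used only against such variations — i.e. that $\ga$ being $d$-minimal among \emph{all} paths in $X\setminus D$ still implies the index form is nonnegative on variations through curves in $X\setminus D$. This is immediate here because $\ga([0,l])$ is compact in the open set $X\setminus D$, so a tubular neighborhood of $\ga$ lies in $X\setminus D$ and every sufficiently $C^0$-small variation with fixed endpoints stays in that neighborhood; and $d$-minimality of $\ga$ among paths in $X\setminus D$ means precisely that $\ga$ locally minimizes $\sL_\om$, so the usual first/second variation argument applies verbatim. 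I would remark that this is exactly the place where geodesic convexity (Corollary \ref{smooth geod}) is essential: without knowing $p,q$ are joined by a minimizer lying entirely in $X\setminus D$, the Bochner/index-form machinery could not be run, as the metric degenerates along $D$.
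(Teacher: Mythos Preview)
Your argument is correct. Both you and the paper invoke Corollary~\ref{smooth geod} to obtain, for any $p,q\in X\setminus D$, a smooth minimal geodesic $\ga\subset X\setminus D$, and then run a classical Riemannian argument along~$\ga$; the density of $X\setminus D$ in $(X,d)$ finishes the diameter bound. The difference is in which classical argument is used: you go via the index form and second variation of arc length, while the paper appeals to the Laplacian comparison theorem proved immediately beforehand, noting that along a minimal geodesic from $p$ one has $\Delta r(\ga(t))\leq (2n-1)\sqrt{\la}\cot(\sqrt{\la}\,t)$, and the right-hand side tends to $-\infty$ as $t\to\pi/\sqrt{\la}$, forcing the length bound. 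Your route is self-contained and does not require the Laplacian comparison as an intermediate step; it also yields the strict inequality directly from the strict Ricci bound. The paper's route packages the curvature input into the Laplacian comparison, which is then reused for the Bishop--Gromov theorem. Your remarks about the variations staying inside $X\setminus D$ (because $\ga([0,l])$ is compact in the open set $X\setminus D$) are exactly what is needed to legitimize the index-form argument in this singular setting.
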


\begin{proof}
By explicit calculation, if $\la>0$, and $\tilde X$ is the space form with sectional curvature $\la$, then along a unit speed minimal geodesic $\tilde \ga$, $$\tilde\Delta\tilde r(\tilde \ga(t)) = (2n-1)\sqrt{\la}\frac{ \cos(\sqrt{\la}t)}{\sin(\sqrt{\la}t)}$$ Fix a point $p\in X\setminus D$. For any other point $x\in X\setminus D$, if $\ga$ is the minimal unit speed geodesic joining them, then $$ \Delta r(\ga(t)) \leq (2n-1) \sqrt{\la}\frac{ \cos(\sqrt{\la}t)}{\sin(\sqrt{\la}t)} $$ Since right hand side goes to $-\infty$ as $t\rightarrow \pi/\sqrt{\la}$, $t$, and hence the length of $\ga$, can be at most $\pi/\sqrt{\la}$.
\end{proof}

Next, the exponential map is a diffeomorphism from an open subset of $\sE_p$ onto $X\backslash (D\cup C_p)$. Moreover, since $C_p\cup D$ has measure zero, standard arguments as in \cite{SY} can be used to prove  the Bishop-Gromov  volume comparison. 
\begin{theorem}[Bishop-Gromov volume comparison]\label{BG}
If $Ric(\omega) >(2n-1) \la\omega$ for some $\la\in \mathbb{R}$ and $\tilde X$ is the $2n$-dimensional space form with constant sectional curvature $\la$. Then
\begin{enumerate}
\item If $K\subset X\setminus D$ is any star convex set  centered at $x$, then for $0<r_1<r_2 (< \pi/\sqrt{\la}$ if $\la>0$), $$ \frac{Vol(B_{d}(x,r_2)\cap K) - Vol(B_{d}(x,r_1)\cap K)}{\tilde V(r_2) - \tilde V(r_1)} \leq  \frac{Vol(\partial B_{d}(x,r_1)\cap K)}{Vol(\partial \tilde B (r_1))}$$
where $\tilde B( r )$ is a ball of radius $r$ in $\tilde X$ and $\tilde V( r ) =  Vol (\tilde B ( r ))$.
\item For all $x\in X$, the volume ratio $$\frac{Vol(B_d(x,r))}{\tilde V( r)}$$ is non-increasing in $r$. 
\end{enumerate}
\end{theorem}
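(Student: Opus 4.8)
\textbf{Plan for the proof of Theorem \ref{BG} (Bishop--Gromov comparison).} The strategy is to reduce everything to a one-dimensional comparison of Jacobian densities along minimal geodesics emanating from $x$, exactly as in the smooth case \cite{SY}, using the structural facts already established: $\exp_p$ is defined and smooth on $\sE_p$ (Lemma following the definition of $\sE_p$), is surjective onto $X\setminus D$ (the displayed Lemma), restricts to a diffeomorphism onto $X\setminus(D\cup\sC_p)$, and $\sC_p\cup D$ has measure zero with respect to $\omega$. First I would set up geodesic polar coordinates on $X\setminus(D\cup\sC_p)$: writing $\omega^n/n! = \mathcal{A}(t,\theta)\,dt\,d\theta$ in these coordinates (with $d\theta$ the measure on the unit sphere of $T_xX$), the Laplacian comparison theorem proved above gives $\Delta r \leq \tilde\Delta\tilde r$ wherever $r$ is smooth, which translates into $\partial_t\log\mathcal{A}(t,\theta) \leq \partial_t\log\tilde{\mathcal{A}}(t)$ along each radial geodesic up to the cut value $t_\theta$, where $\tilde{\mathcal{A}}(t)$ is the model density on $\tilde X$. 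Integrating, $t\mapsto \mathcal{A}(t,\theta)/\tilde{\mathcal{A}}(t)$ is non-increasing on $(0,t_\theta)$, and since $\mathcal{A}(t,\theta)/\tilde{\mathcal{A}}(t)\to 1$ as $t\to 0$, one gets $\mathcal{A}(t,\theta)\leq\tilde{\mathcal{A}}(t)$.

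For part (1), I would integrate this monotonicity over the set of directions $\theta$ for which the radial geodesic stays in the star-convex set $K$; since $K$ is star convex centered at $x$, the radial segment $\ga_\theta|_{[0,t]}$ lies in $K$ for all $t$ below its cut value whenever its endpoint does, so the domain of integration in $\theta$ is nested in $r$ in the right way. Then $\operatorname{Vol}(B_d(x,r)\cap K) = \int (\int_0^{\min(r,t_\theta)}\mathcal{A}(t,\theta)\,dt)\,d\theta$, and the standard Gromov lemma on monotone ratios of integrals (if $f/g$ is non-increasing pointwise then so is $\int_{r_1}^{r_2} f\,/\int_{r_1}^{r_2} g$, and the annular ratio is controlled by the spherical ratio at $r_1$) yields the stated inequality. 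The measure-zero fact for $\sC_p\cup D$ is what guarantees that these polar-coordinate integrals compute the actual Riemannian volumes $\operatorname{Vol}(B_d(x,r))$, with no contribution lost on the cut locus or on $D$. Part (2) is the special case $K = X\setminus D$ together with the observation that $B_d(x,r)$ has full measure in $X$ (so $\operatorname{Vol}(B_d(x,r)) = \operatorname{Vol}_\omega(B_d(x,r)\cap(X\setminus D))$), combined with the elementary fact that (1) for nested annuli forces the total ratio $\operatorname{Vol}(B_d(x,r))/\tilde V(r)$ to be non-increasing; for $x\in D$ one passes to a limit using continuity of $r\mapsto\operatorname{Vol}(B_d(x,r))$ and the density of $X\setminus D$, or more directly notes that the volume-density monotonicity is a closed condition under the Gromov--Hausdorff convergence $(X,\oeta)\to(X,d)$ of Proposition \ref{smoothening} combined with Colding's volume convergence \cite{Co}.

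\textbf{Main obstacle.} The delicate point is not the one-variable ODE comparison but the bookkeeping that makes the polar-coordinate representation legitimate near $D$: one must be sure that minimal geodesics from $x\in X\setminus D$ realizing distances to points near $D$ do not accumulate bad behavior, and that the exceptional set $\sC_p\cup D$ is genuinely $\omega$-null so that $\operatorname{Vol}_\omega$ of balls equals the integral of $\mathcal{A}$ over the star-shaped domain in $T_xX$. Both are already in hand — the former from geodesic convexity (Theorem \ref{conv}) which keeps the relevant geodesics in $X\setminus D$, and the latter from the corollary that $\sC_p$ has measure zero plus the fact that $D$ is a divisor — so the proof is essentially the transcription of \cite{SY}; the only real care is in handling the star-convex set $K$ in part (1), where one needs that $K$ being star convex in the sense defined (every point joined to $x$ by a geodesic in $K$) makes the angular domains $\{\theta : \ga_\theta(t)\in K\}$ behave monotonically, and in part (2) the treatment of base points $x$ lying on $D$, which I would handle by the Gromov--Hausdorff limiting argument rather than trying to do polar coordinates at a singular point.
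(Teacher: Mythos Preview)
Your proposal is correct and follows exactly the route the paper indicates: the paper does not write out a proof but simply observes that $\exp_p$ is a diffeomorphism onto $X\setminus(D\cup\sC_p)$, that $\sC_p\cup D$ has measure zero, and then defers to the standard polar-coordinate argument in \cite{SY}, which is precisely what you have sketched in detail. Your handling of $x\in D$ in part (2) via Gromov--Hausdorff approximation and Colding's volume convergence is not made explicit in the paper but is entirely in keeping with the techniques used elsewhere in it (e.g.\ Proposition~\ref{vol est}).
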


\begin{remark}\label{con gromov}
As a corollary to Theorem \ref{BG} above, Lemma \ref{gromov} generalizes to K\"ahler currents satisfying equation \eqref{con ke}, and in particular to conical K\"ahler-Einstein metrics. This was very useful in \cite{DGSW} to study the degeneration of conical K\"ahler-Einstein metrics on toric manifolds.\end{remark}

\end{document}